\newcommand{\w}[1]{\ensuremath{\mathbf{\omega}_{#1}}}
\newtheorem{prop}{Proposition}[section]
\newtheorem{lemma}{Lemma}[section]
\newtheorem{conj}{Conjecture}[section]
\theoremstyle{definition}
\newtheorem{defn}{Definition}[section]
\newtheorem{query}{Query}[section]
\newtheorem*{rem}{Remark}
\newcommand{\p}[2]{\ensuremath{p^{#1}_{#2}}}
\newcommand{\q}[2]{\ensuremath{q^{#1}_{#2}}}
\newcommand{\MM}[2]{\ensuremath{\mathcal{M}^{#1}_{#2}}}
\newcommand{\LL}[2]{\ensuremath{\mathcal{L}^{#1}_{#2}}}
\newcommand{\CC}[1]{\ensuremath{\mathbf{C}^{#1}}}
\newcommand{\CP}[1]{\ensuremath{\mathbf{CP}^{#1}}}
\newcommand{\PP}[1]{\ensuremath{\mathbf{P}^{#1}}}
\newcommand{\R}[1]{\ensuremath{\mathbf{R}^{#1}}}
\newcommand{\RP}[1]{\ensuremath{\mathbf{RP}^{#1}}}
\newcommand{\RR}{\ensuremath{\mathcal{R}}}
\newcommand{\HH}{\ensuremath{\mathcal{H}}}
\newcommand{\T}{\ensuremath{\mathcal{T}}}
\newcommand{\Alt}[1]{\ensuremath{\mathcal{A}_{#1}}}
\newcommand{\Sym}[1]{\ensuremath{\mathcal{S}_{#1}}}
\newcommand{\G}[1]{\ensuremath{\mathcal{G}_{#1}}}
\newcommand{\Z}[1]{\ensuremath{\mathbf{Z}_{#1}}}
\newcommand{\Ss}{\ensuremath{\mathcal{S}}}
\newcommand{\obar}[1]{\ensuremath{\overline{#1}}}
\newcommand{\tld}[1]{\ensuremath{\widetilde{#1}}}
\newcommand{\arrGap}{5pt}
\newcommand{\tabGap}{8pt}
\newcommand{\fig}[1]{Figure~\ref{#1}}
\newcommand{\app}[1]{\textsc{Appendix~\ref{#1}}}
\newcommand{\tab}[1]{\textsc{Table~\ref{#1}}}
\newcommand{\basinWidth}{3.6in}
\begin{document}

\title[Solving the octic by iteration]
 {Solving the octic by iteration in six dimensions}
\author{Scott Crass}
\address{
Mathematics Department\\
California State University, Long Beach\\
Long Beach, CA 90840\\}
\date{\today}
\email{scrass@csulb.edu}
\keywords{complex dynamics, equivariant maps, polynomial equations,
symmetric group}

\begin{abstract}

The requirement for solving a polynomial is a means of breaking its
symmetry, which in the case of the octic, is that of the symmetric group
\Sym{8}. Its eight-dimensional linear permutation representation restricts
to a six-dimensional projective action. A mapping of complex projective
$6$-space with this \Sym{8} symmetry can provide the requisite
symmetry-breaking tool.

The article describes some of the \Sym{8} geometry in \CP{6} as
well as a special \Sym{8}-symmetric rational map in degree four.
Several basins-of-attraction plots illustrate the map's geometric
and dynamical properties. The work culminates with an explicit
algorithm that uses this map to solve a general octic. A
concluding discussion treats the generality of this approach to
equations in higher degree.

\end{abstract}
\maketitle

\section{Overview}

In \cite{quintic}, I develop a solution to the quintic that relies
on a single iteration in three dimensions.  Given almost any
quintic $p$, there is a map on complex projective $3$-space \CP{3}
whose dynamics provides for a root of $p$. This approach is
geometric: the map has the \Sym{5} symmetry of the general
quintic.

The present paper extends this work to the eighth-degree equation.
At its core is an \Sym{8}-symmetric map on \CP{6} whose geometric
behavior is connected to a special configuration of lines.
Motivating this general project is a desire to develop solutions
to equations that utilize symmetrical and geometrically elegant
dynamical systems.  I do not consider the numerical aspects of the
algorithm.  However, since the map converges very rapidly,
numerical considerations might well be of interest.  Indeed, the
geometry associated with the map accounts for the rapid
convergence.

In addition, the work establishes the existence of a method for
all degrees greater than four that is analogous to the
eighth-degree case. This involves showing that there is an
infinite family of maps---one for each dimension greater than
two---with special geometric properties.

Finally, these maps add to the examples of complex dynamics in
several dimensions.  This recently-active and difficult field
seems to be in need of examples that are not concocted for
purposes of illustration.

The work unfolds in four stages: 1) some background geometry, 2) a
special map with \Sym{8} symmetry, 3) a solution to the octic
based on the preceding stages, 4) a consideration of whether the
octic algorithm generalizes to higher degree equations.

\textbf{Section \ref{sec:geom}: \Sym{8} geometry.} The setting
here is \CP{6} upon which the symmetric group \Sym{8} acts.
Finding a map with special \Sym{8} geometry requires some
familiarity with this action. We will consider some features
associated with the map that emerges in the second stage.  Indeed,
the discovery of this map derives from an awareness of the
algebraic and geometric surroundings:

\begin{itemize}

\item coordinate systems on \CP{6}

\item the structure of certain special orbits of points, lines, planes, and
hyperplanes

\item the system of \Sym{8}-invariant polynomials---the building-blocks
for maps that are \Sym{8}-symmetric.

\end{itemize}

\textbf{Section \ref{sec:maps}: Maps with \Sym{8} symmetry.} At
this stage, we exploit our geometric understanding to discover
empirically a map associated with the complete graph on eight
vertices---an $8$-point \Sym{8} orbit. The discussion turns to its
geometric and dynamical behavior---empirical testing suggests that
the $8$-point orbit is the only attractor. However, whether it
possesses this or another desired global dynamical property is not
known. In light of substantial experimental and graphical
evidence, I attribute these properties to the map in conjectures.

\textbf{Section \ref{sec:octic}: Dynamical solution to the octic.}
A special family of octics corresponds to a \emph{rigid} family
$\mathcal{E}$ of \Sym{8}-symmetric maps on \CP{6}. `Rigidity' means that
each member of $\mathcal{E}$ is conjugate to a single reference map $f$.
Thus, associated with an octic $p$ is a map $g_p=\phi_p\,f\,\phi_p^{-1}$
that we iterate. Using \Sym{8} tools, the dynamical
output---conjecturally, a single \Sym{8} orbit---provides for an
approximate solution to $p=0$.  Since almost any octic $p$ transforms into
the special family, the solution is general.

Note: Up to this point, the exposition follows that of
\cite{quintic} which the reader can consult for details.

\textbf{Section \ref{sec:gen}: Generalization: Solving the
$n$th-degree equation by iteration in $n-2$ dimensions.}  The
geometric and dynamical description of the octic-solving map has
an analogue for each permutation-based \Sym{n} action on \CP{n-2}
with $n \geq 5$. Here, we can show that there is always a map with
the desired special properties. Given such a map for which the
$n$-point orbit is the attractor, the solution algorithm for the
octic generalizes to one for the $n$th degree equation.

\section{\Sym{8} acts on \CP{6}} \label{sec:geom}

The search for a special \Sym{8}-symmetric map begins with a faithful
action of \Sym{8}.  Klein's approach to the $n$th degree equation was to
look for the lowest dimensional faithful action of \Sym{n} or the
alternating group \Alt{n}. For $n<8$, there are special actions of either
\Sym{n} or \Alt{n}; that is, there are faithful representations that do
not derive directly from permutations on \CC{n}.  However, special
geometry---at least for linear actions---ends at $n=7$.  When $n>7$, the
space of least dimension on which \Sym{n} or \Alt{n} acts faithfully is
\CP{n-2}. \cite{Wiman}

The permutation action of the symmetric group \Sym{8} on \CC{8}
preserves the hyperplane
$$\HH_x = \Biggl\{\sum_{k=1}^8 x_k =0 \Biggr\} \simeq \CC{7}$$
and, thereby, restricts to a faithful seven-dimensional
irreducible representation. This \CC{7} action projects one-to-one
to a group \G{8!} on \CP{6}.

\subsection{Coordinates}

For many purposes, the most perspicuous geometric description of
\G{8!} employs eight coordinates that sum to zero. One advantage
is the simple expression of the \Sym{8}-duality between points and
hyperplanes. In general, for a finite action \G{} whose matrix
representatives are unitary, a point $a$ is \G{}-\emph{dual} to
the hyperplane
$$\LL{}{} = \{ \obar{a} \cdot x = 0 \}.$$
Consequently, $a$ and \LL{}{} have the same stabilizer in \G{}.
Since the action of \Sym{8} on \CC{7} is orthogonal, a point
$$a = [a_1,a_2,a_3,a_4,a_5,a_6,a_7,a_8]_{\sum a_k = 0} \in \CP{6}$$
corresponds to the hyperplane
$$\{a \cdot x=0\}=\Biggl\{\sum_{k=1}^8 a_k\,x_k = 0\Biggr\}.$$
(Square brackets indicate homogeneous coordinates.)

A system of seven \emph{hyperplane coordinates} describes the hyperplane
$\HH_u$. It arises from the ``hermitian" change of variable
$$u = H\,x  \qquad x = \obar{H^T} u \qquad
H=\frac{1}{2\,\sqrt{2}}
\begin{pmatrix}
1&\w{}&i&\w{}^3&-1&\w{}^5&-i&\w{}^7\\
1&i&-1&-i&1&i&-1&-i\\
1&\w{}^3&-i&\w{}&-1&\w{}^7&i&\w{}^5\\
1&-1&1&-1&1&-1&1&-1\\
1&\w{}^5&i&\w{}^7&-1&\w{}&-i&\w{}^3\\
1&-i&-1&i&1&-i&-1&i\\
1&\w{}^7&-i&\w{}^5&-1&\w{}^3&i&\w{}
\end{pmatrix}
$$
where $\w{}=e^{\pi\,i/4}$ and the choice of scalar factor gives
\begin{equation}
H\,\obar{H^T} = I_7 \qquad \obar{H^T} H = (a_{ij}) \qquad a_{ij} =
\begin{cases} -\frac{7}{8}&i=j\\ \frac{1}{8}&i\neq j \end{cases}
\qquad i,j=1, \dots 8.
\end{equation}

\subsection{Invariant polynomials}

According to the fundamental result on symmetric functions the $n$
elementary symmetric functions of degrees one through $n$ generate
the ring of \Sym{n}-invariant polynomials. Since the \Sym{8}
action on \CP{6} occurs where the degree-$1$ symmetric polynomial
vanishes, there are seven generating \G{8!}-invariants. By
Newton's identities, the power sums
$$F_k(x) = \sum_{\ell=1}^8 x_{\ell}^k \qquad k=2, \dots, 8$$
also generate the \G{8!} invariants. In hyperplane coordinates,
the \emph{forms} in degrees two and three are \small
\begin{align*}
\Phi_2(u)=&\ F_2(\obar{H^T} u)=\ u_4^2 + 2\,u_3\,u_5 + 2\,u_2\,u_6 + 2\,u_1\,u_7\\[\arrGap]
\Phi_3(u)=&\ \frac{3}{2\,\sqrt{2}}\,\bigl( u_2\,u_3^2 + u_2^2\,u_4
+ 2\,u_1\,u_3\,u_4 + 2\,u_1\,u_2\,u_5 + u_1^2\,u_6 + u_5^2\,u_6 +
u_4\,u_6^2\\
&\ + 2\,u_4\,u_5\,u_7 + 2\,u_3\,u_6\,u_7 + u_2\,u_7^2 \bigr)
\end{align*}
\normalsize The remaining five generating invariants $\Phi_k(u)$
arise algebraically from these two. Classical techniques show that
a \emph{relative invariant}---invariant up to a multiplicative
character---results from taking the determinant of the
\emph{bordered hessian}
$$BH(F,G,J) =
\begin{pmatrix}
&&&\frac{\partial{G}}{\partial{x_1}}\\
&H(F)&&\vdots\\
&&&\frac{\partial{G}}{\partial{x_n}}\\
\frac{\partial{J}}{\partial{x_1}}& \dots &
 \frac{\partial{J}}{\partial{x_n}}&0
\end{pmatrix}$$
of three $\mathrm{GL}_n(\CC{})$ invariants $F$, $G$, and $J$.  The
$n\times n$ matrix $H(F)$ is the hessian of $F$.

\begin{prop} \label{prop:hess}
Given $T \in \mathrm{GL}_n(\CC{})$ and
$B(x)=\det{(BH(F(x),G(x),J(x)))}$ for invariants $F, G, J$,
\begin{align*}
B(Tx) &= (\det{T})^{-2} B(x).
\end{align*}
\end{prop}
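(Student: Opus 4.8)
The plan is to track how the bordered hessian behaves under the substitution $x\mapsto Tx$ using nothing beyond the chain rule and the multiplicativity of the determinant; the hypothesis that $F$, $G$, $J$ are invariants enters only at the last step, and only in the form $F\circ T=F$, $G\circ T=G$, $J\circ T=J$. First I would record the elementary fact that, because $T$ is linear, composition with $T$ acts on derivatives with no lower-order remainder: for any polynomial $\phi$,
\begin{align*}
\nabla(\phi\circ T)(x) &= T^T\,(\nabla\phi)(Tx), & H(\phi\circ T)(x) &= T^T\,H(\phi)(Tx)\,T .
\end{align*}
Applying this to $\phi\in\{F,G,J\}$ and replacing each $\phi\circ T$ by $\phi$ via invariance, one rearranges to
\begin{align*}
H(F)(Tx) &= (T^{-1})^T\,H(F)(x)\,T^{-1},\\
(\nabla G)(Tx) &= (T^{-1})^T\,(\nabla G)(x),\\
(\nabla J)(Tx) &= (T^{-1})^T\,(\nabla J)(x).
\end{align*}

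Next I would feed these three identities into the $(n+1)\times(n+1)$ matrix $BH(F,G,J)(Tx)$, reading off its Hessian block, its last column (the partials of $G$), its last row (the partials of $J$), and its zero corner. Matching entries shows
$$BH(F,G,J)(Tx)=\begin{pmatrix}(T^{-1})^T & 0\\ 0 & 1\end{pmatrix}\,BH(F,G,J)(x)\,\begin{pmatrix}T^{-1} & 0\\ 0 & 1\end{pmatrix},$$
because the left factor $\mathrm{diag}((T^{-1})^T,1)$ together with the right factor $\mathrm{diag}(T^{-1},1)$ conjugates the Hessian block, feeds a single $(T^{-1})^T$ into the $\partial G/\partial x_i$ column, feeds a single $T^{-1}$ into the $\partial J/\partial x_j$ row, and fixes the corner. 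Taking determinants and using $\det\mathrm{diag}((T^{-1})^T,1)=\det\mathrm{diag}(T^{-1},1)=(\det T)^{-1}$ gives $B(Tx)=(\det T)^{-1}\,B(x)\,(\det T)^{-1}=(\det T)^{-2}\,B(x)$, which is the assertion.

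I expect the only place needing genuine care to be the bookkeeping in the border: one must check that the asymmetry between $G$ (which supplies a column) and $J$ (which supplies a row) is exactly matched by conjugating with $\mathrm{diag}((T^{-1})^T,1)$ on the left and $\mathrm{diag}(T^{-1},1)$ on the right, so that each border strip absorbs a single factor of $T^{-1}$ in the correct transpose while the Hessian block absorbs two. Once that is in hand the exponent $-2$ is forced, being $\det((T^{-1})^T)\cdot\det(T^{-1})$, one power of $(\det T)^{-1}$ from each side. I would also note that no property of $F$, $G$, $J$ is used beyond their $T$-invariance, so running the same computation with $F\circ T$, $G\circ T$, $J\circ T$ in place of $F$, $G$, $J$ yields the hypothesis-free identity $B(Tx)=(\det T)^{-2}\det\bigl(BH(F\circ T,G\circ T,J\circ T)(x)\bigr)$, of which the Proposition is the invariant special case; this is exactly the mechanism that turns the bordered hessian of invariants into a relative invariant with character $(\det)^{-2}$.
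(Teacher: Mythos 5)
Your proof is correct, and it is essentially the argument the paper relies on: the paper states Proposition~\ref{prop:hess} without proof (attributing it to ``classical techniques''), but it later records exactly your key identity, namely $BH_x = \mathrm{diag}(A^T,1)\,BH_y\,\mathrm{diag}(A,1)$ under a linear change of variables, from which the factor $(\det T)^{-2}$ follows by multiplicativity of the determinant once invariance of $F$, $G$, $J$ is used to move everything back to the argument $x$. Your bookkeeping of the Hessian block versus the $G$-column and $J$-row is accurate, so nothing further is needed.
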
 \noindent
For the permutation action of \Sym{8}, this results in an
\emph{absolute} invariant that is expressible in terms of
the generators $\Phi_k$. The following result will serve a
subsequent computational purpose. (Many of this work's results
arise from \emph{Mathematica} computations.)

\begin{prop} \label{prop:PhiInG}

With
\begin{align*}
G_4 &= \det{(BH(\Phi_2,\Phi_3,\Phi_3))}&
G_5 &= \det{(BH(\Phi_2,\Phi_3,\Phi_4))}\\
G_6 &= \det{(BH(\Phi_2,\Phi_4,\Phi_4))}&
G_7 &= \det{(BH(\Phi_2,\Phi_4,\Phi_5))}\\
G_8 &= \det{(BH(\Phi_2,\Phi_5,\Phi_5))},
\end{align*}
the ``power-sum" invariants are given by
\begin{align*}
\Phi_4 &= \frac{1}{576} \bigl(72\,\Phi_2^2 + G_4 \bigr)&
\Phi_5 &= \frac{1}{768} \bigl(96\,\Phi_2\,\Phi_3 + G_5 \bigr)\\
\Phi_6 &= \frac{1}{960} \bigl(120\,\Phi_2\,\Phi_4 + G_6 \bigr)&
\Phi_7 &= \frac{1}{1280} \bigl(160\,\Phi_3\,\Phi_4 + G_7 \bigr)\\
\Phi_8 &= \frac{1}{1600} \bigl(200\,\Phi_4^2 + G_8 \bigr).
\end{align*}

\end{prop}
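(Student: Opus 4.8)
The plan is to recognize each $G_k$ as a homogeneous degree-$k$ element of the ring of $\Sym{8}$-invariant polynomials and then to express it in terms of the short list of monomials in $\Phi_2,\dots,\Phi_k$ of weighted degree $k$. First I would check that $G_k$ is an \emph{absolute} invariant. By Proposition~\ref{prop:hess}, $G_k(Tu)=(\det T)^{-2}G_k(u)$ for every $T$ in the seven-dimensional representation of $\Sym{8}$ on the $u$-coordinates; each such $T$ is conjugate to the restriction of a permutation matrix to $\HH_x$, so $\det T=\pm1$ and $(\det T)^{-2}=1$. The degree is forced by the shape of the bordered hessian $BH(\Phi_2,\Phi_b,\Phi_c)$: the block $H(\Phi_2)$ has constant entries, the border column consists of the degree-$(b-1)$ first partials of $\Phi_b$, the border row of the degree-$(c-1)$ first partials of $\Phi_c$, and the corner entry is $0$. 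Since the corner vanishes, every nonzero term in the Laplace expansion of the determinant takes exactly one factor from the border column, one from the border row, and the rest from $H(\Phi_2)$; hence $\deg G_k=(b-1)+(c-1)=k$ in each of the five cases.

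Next I would invoke the structure of the invariant ring. On $\HH_x\cong\CC{7}$ the ring of $\Sym{8}$-invariants is freely generated by $\Phi_2,\dots,\Phi_8$ --- the classical fundamental theorem on symmetric functions, applied after setting the first power sum to zero (equivalently, $\Sym{8}$ acts on $\HH_x$ as the reflection group of type $A_7$). So the degree-$k$ graded piece has as a vector-space basis the monomials $\Phi_2^{a_2}\cdots\Phi_8^{a_8}$ with $\sum_j j\,a_j=k$, and $\Phi_k$ is the unique such monomial that is not a product of $\Phi_j$'s with $j<k$. Consequently $G_k=c_k\,\Phi_k+R_k$ for a constant $c_k$ and a polynomial $R_k$ in $\Phi_2,\dots,\Phi_{k-1}$; solving for $\Phi_k$ then requires only $c_k\neq0$ and the value of $R_k$. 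The proposition records the precise answer: $c_4,\dots,c_8=576,\,768,\,960,\,1280,\,1600$ and $R_4,\dots,R_8=-72\Phi_2^2,\,-96\Phi_2\Phi_3,\,-120\Phi_2\Phi_4,\,-160\Phi_3\Phi_4,\,-200\Phi_4^2$.

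To determine $c_k$ and $R_k$ I would use the Schur complement in $x$-coordinates. Identify $\CC{7}$ with $\HH_x$ via $x=\obar{H^T}u$; by Proposition~\ref{prop:hess} this changes each $G_k$ by only a nonzero scalar. Since $\Phi_2$ restricts on $\HH_x$ to the nondegenerate quadratic form $F_2$, the block $H(\Phi_2)$ is an invertible constant matrix, and
\[ G_k=\det BH(\Phi_2,\Phi_b,\Phi_c)=-\det\!\bigl(H(\Phi_2)\bigr)\,\bigl(\nabla\Phi_c\bigr)^{T}\,H(\Phi_2)^{-1}\,\bigl(\nabla\Phi_b\bigr) \]
exhibits $G_k$, up to a nonzero constant, as the $H(\Phi_2)^{-1}$-pairing of the gradients of $\Phi_b$ and $\Phi_c$ on $\HH_x$. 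Expanding in the power sums --- writing the gradient of $F_m$ along $\HH_x$ as the orthogonal projection of the ambient $\nabla F_m$ and using $\sum_\ell x_\ell^{a}\,x_\ell^{b}=F_{a+b}$ --- collapses the pairing to a combination of just two power sums, one proportional to $F_k$ and one to $F_{b-1}F_{c-1}$. Re-expressing in the generators $\Phi_j$ then produces the asserted identities; in particular $c_k\neq0$, because the $F_k$-term survives and, for $2\le k\le8$, $F_k$ is a genuine ring generator rather than a polynomial in lower power sums. The remaining work --- and the real obstacle --- is bookkeeping: tracking the scalar normalizations relating $\Phi_3,\dots,\Phi_8$ to the raw power sums, together with $\det H(\Phi_2)$ and the scalar from the coordinate change, so that the rational factors $\tfrac1{576},\dots,\tfrac1{1600}$ emerge with exactly these values. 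That last verification is what the cited \emph{Mathematica} computation performs.
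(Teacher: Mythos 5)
Your approach---absolute invariance via Proposition~\ref{prop:hess}, degree-counting through the Laplace expansion around the vanishing corner, the Schur-complement reduction $\det BH(\Phi_2,\Phi_b,\Phi_c)=-\det H(\Phi_2)\,(\nabla\Phi_c)^T H(\Phi_2)^{-1}\nabla\Phi_b$, and the identification of the two surviving power sums---is sound and is a genuinely different route from the paper, which offers no argument and simply cites \emph{Mathematica}.  It is also more informative: carrying the Schur-complement pairing through on $\HH_x$ (with $H(\Phi_2)=2R$ in $u$-coordinates, or equivalently with the bilinear projector $I_8-\tfrac18 J_8$ in $x$-coordinates) one finds the closed form
$$G_k \;=\; 64\,bc\,\Phi_k \;-\; 8\,bc\,\Phi_{b-1}\Phi_{c-1},$$
where $b,c$ are the degrees of the two bordered invariants.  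This reproduces the proposition's $(576,\,72\,\Phi_2^2)$, $(768,\,96\,\Phi_2\Phi_3)$, $(1280,\,160\,\Phi_3\Phi_4)$, $(1600,\,200\,\Phi_4^2)$ for $G_4,G_5,G_7,G_8$.

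The gap is in the sixth line, and your own structural claim exposes it: you assert the correction term is $F_{b-1}F_{c-1}$, which for $G_6=\det BH(\Phi_2,\Phi_4,\Phi_4)$ (i.e.\ $b=c=4$) is $F_3^2=\Phi_3^2$, and the constants $64\,bc,\,8\,bc$ are $1024,\,128$.  Yet you copy the proposition's $960$ and $-120\,\Phi_2\Phi_4$ into your list of $c_k,R_k$ without noticing that they contradict your own deduction.  Since $\Phi_2,\dots,\Phi_8$ are algebraically independent on $\HH_x$, $1024\Phi_6-128\Phi_3^2$ and $960\Phi_6-120\Phi_2\Phi_4$ are genuinely different, so your argument as written does not prove the proposition as stated; rather it shows that, for the proposition's constants and monomial to be right, $G_6$ would have to be $\det BH(\Phi_2,\Phi_3,\Phi_5)$ (giving $b=3,\,c=5,\,bc=15$, hence $960$, $120$, and $\Phi_2\Phi_4$).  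You should flag this explicitly: either the definition of $G_6$ or the displayed formula for $\Phi_6$ must be corrected, and the Schur-complement computation is exactly the tool that detects it.
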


\subsection{Special orbits}  \label{sec:specOrbs}

The $6$-dimensional \Sym{8} action comes in both real and complex
versions. This means that, in the standard $x$ coordinates, \G{8!}
acts on \RR---the \RP{6} of points with real components.
\tab{tab:RP6} enumerates some special orbits contained in \RR. For
ease of expression, I will refer to special points (or lines,
planes, etc.) in terms of the orbit size: ``$8$-points"
($28$-lines, $56$-planes, $28$-hyperplanes). Also, these points
receive a symbolic description in reference to orbit size
(superscript) and coordinate expression (subscript).

Corresponding to each special point $a$ is the hyperplane $\{a
\cdot x=0\}$. In the case of the $28$-points
$$[1,-1,0,0,0,0,0,0],\ \dots\ ,[0,0,0,0,0,0,1,-1],$$
there are the $28$-hyperplanes
$$\LL{5}{28_{12}}=\{x_1 = x_2\},\ \dots\ ,\LL{5}{28_{78}}=\{x_7 = x_8\}.$$
The involutions
$$x_1 \leftrightarrow x_2,\ \dots\ , x_7 \leftrightarrow x_8$$
pointwise fix the respective hyperplanes. These $28$
transpositions generate \G{8!} so that it acts as both a
\emph{real} and \emph{complex reflection group}. (See \cite{ST}.)

Various special planes and lines appear as intersections of the
$28$-hyperplanes. \tab{tab:planes} and \tab{tab:lines} summarize the
situation. Of particular dynamical significance is the collection of
$28$-lines \LL{1}{28_{ij}}. This configuration forms the complete graph on
the $8$-points.  (See \fig{fig:28Lines} for two views.)

%%%%%%%%%%%%%%
\begin{figure}[h]

\resizebox{2.9in}{!}{\includegraphics{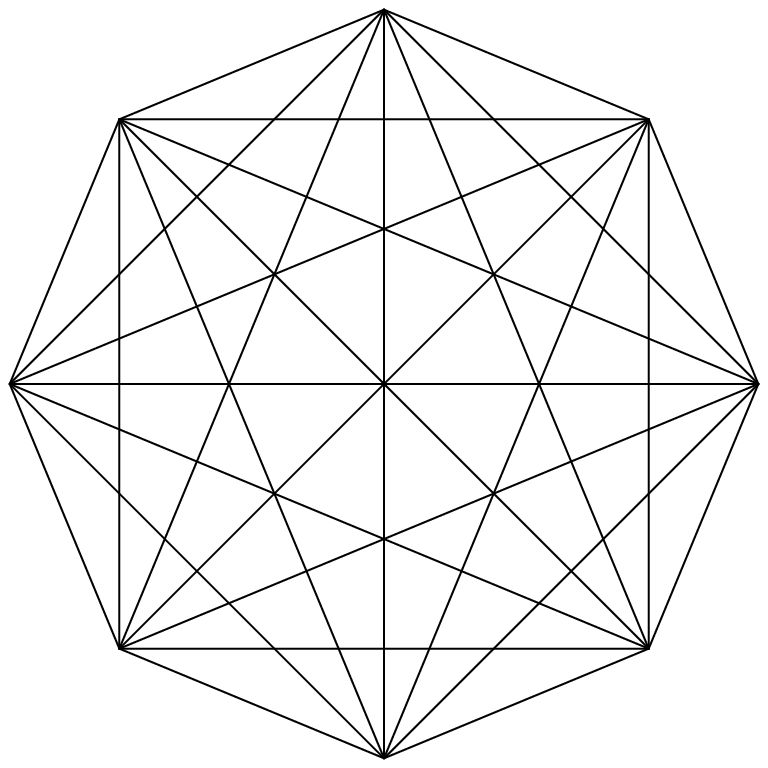}}
\resizebox{2.9in}{!}{\includegraphics{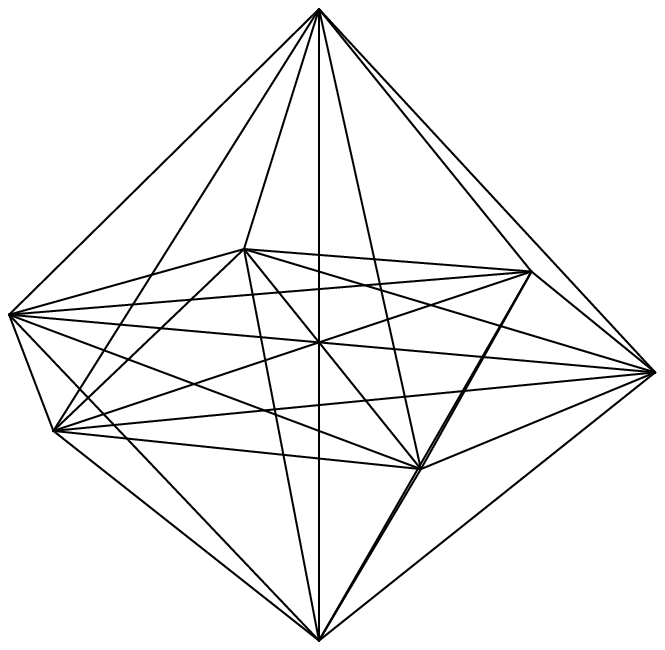}}

\caption{Configuration of $28$-lines and $8$-points}

\label{fig:28Lines} \flushleft

\end{figure}

%-------
\begin{table}[h]

\caption{Three special orbits}

\label{tab:RP6}

$$\begin{array}{c|c|c|c}

\text{Size}&\text{Representative}&
\text{Descriptor}&\text{Stabilizer}\\
\hline
&&&\\
8&[-7,1,1,1,1,1,1,1]&\p{8}{1}&\Sym{7}\\[\tabGap]

28&[0,0,0,0,0,0,1,-1]&\p{28}{78}&\Sym{6} \times \Z{2}\\[\tabGap]

28&[1,1,1,1,1,1,-3,-3]&\q{28}{78}&\Sym{6} \times \Z{2}\\[\tabGap]

\end{array}$$

\end{table}
%%%%%%%%%%%
\begin{table}[h]

\caption{Some fundamental \CP{2} orbits}

\label{tab:planes}

$$\begin{array}{c|c|c|c|c}

&&\text{Set-wise}&\text{Point-wise}&\text{Restricted}\\
\text{Geometric definition}&\text{Descriptor}&
\text{stabilizer}&\text{stabilizer}&\text{action}\\
\hline
&&&&\\
\LL{5}{28_{ij}} \cap \LL{5}{28_{ik}}\cap \LL{5}{28_{i\ell}}\cap
\LL{5}{28_{im}}
&\LL{2}{56}&\Sym{5} \times \Sym{3}&\Sym{5}&\Sym{3}\\[\tabGap]

\LL{5}{28_{ij}} \cap \LL{5}{28_{k\ell}}\cap \LL{5}{28_{mn}}\cap
\LL{5}{28_{pq}}
&\LL{2}{105}&\Sym{4} \times \Z{2}^4&\Z{2}^4&\Sym{4}\\[\tabGap]

\LL{5}{28_{ij}} \cap \LL{5}{28_{ik}}\cap \LL{5}{28_{\ell m}}\cap
\LL{5}{28_{\ell n}}
&\LL{2}{280}&\Sym{3}^2 \times \Z{2}^2&\Sym{3}^2&\Z{2}^2\\[\tabGap]

\LL{5}{28_{ij}} \cap \LL{5}{28_{k\ell}}\cap \LL{5}{28_{km}}\cap
\LL{5}{28_{kn}}
&\LL{2}{420}&\Sym{4} \times \Z{2}^2&\Sym{4} \times \Z{2}&\Z{2}\\[\tabGap]

\LL{5}{28_{ij}} \cap \LL{5}{28_{k\ell}}\cap \LL{5}{28_{mn}}\cap
\LL{5}{28_{mp}} &\LL{2}{840}&\Sym{3} \times \Z{2}^3&\Sym{3} \times
\Z{2}^2&\Z{2}

\end{array}$$

\end{table}
%%%%%%%%%%%%%
\begin{table}[h]

\caption{Special \CP{1} orbits}

\label{tab:lines}

$$\begin{array}{c|c|c|c|c}

&&\text{Set-wise}&\text{Point-wise}&\text{Restricted}\\
\text{Geometric definition}&\text{Descriptor}&
\text{stabilizer}&\text{stabilizer}&\text{action}\\
\hline
&&&&\\
\LL{5}{28_{ij}} \cap \LL{5}{28_{ik}}\cap \LL{5}{28_{i\ell}}\cap
 \LL{5}{28_{im}}\cap \LL{5}{28_{in}}
&\LL{1}{28_{pq}}&\Sym{6} \times \Z{2}&\Sym{6}&\Z{2}\\
&\scriptstyle{p,q \neq i,\dots,n}&&&\\[\tabGap]

\LL{5}{28_{ij}} \cap \LL{5}{28_{k\ell}}\cap \LL{5}{28_{km}}\cap
 \LL{5}{28_{kn}}\cap \LL{5}{28_{kp}}
&\LL{1}{168}&\Sym{5} \times \Z{2}&\Sym{5} \times \Z{2}&\Z{1}\\[\tabGap]

\LL{5}{28_{ij}} \cap \LL{5}{28_{k\ell}}\cap \LL{5}{28_{mn}}\cap
 \LL{5}{28_{mp}}\cap \LL{5}{28_{mq}}
&\LL{1}{210}&\Sym{4} \times \Z{2}^3&\Sym{4} \times \Z{2}^2&\Z{2}\\[\tabGap]

\LL{5}{28_{ij}} \cap \LL{5}{28_{k\ell}}\cap \LL{5}{28_{km}}\cap
 \LL{5}{28_{np}}\cap \LL{5}{28_{nq}}
&\LL{1}{280}&\Sym{3}^2 \times \Z{2}^2&\Sym{3} \times \Z{2}&\Z{2}\\[\tabGap]

\LL{5}{28_{ij}} \cap \LL{5}{28_{ik}}\cap \LL{5}{28_{\ell m}}\cap
 \LL{5}{28_{\ell n}}\cap \LL{5}{28_{\ell p}}
&\MM{1}{280}&\Sym{4} \times \Sym{3}&\Sym{4} \times \Sym{3}&\Z{1}\\[\tabGap]

\end{array}$$

\end{table}
%----------

\section{Equivariant maps} \label{sec:maps}

The primary tool to be used in solving the general octic is a
rational map $f$ with \Sym{8} symmetry. In algebraic terms, this
means that
$$f \circ T = T\,f \quad \text{for all}\ T \in \G{8!}$$
while the geometric upshot is that $f$ sends \G{8!}-orbits to
\G{8!}-orbits. Furthermore, the map should have \emph{reliable
dynamics}: its attractor

\begin{enumerate}

\item[1)] is a \emph{single} \G{8!} orbit

\item[2)] has a corresponding basin with full measure in \CP{6}
(\emph{strongly} reliable)

\item[$2'$)] alternatively, has a corresponding basin that is dense in
\CP{6} (\emph{weakly} reliable).

\end{enumerate}

%----------
\subsection{Basic maps}

A finite group action \G{} on \CC{n} induces an action on the
associated exterior algebra. Moreover, \G{}-invariant
$(n-1)$-forms correspond to \G{}-equivariant maps.  (See
\cite{sextic}.)

For a reflection group, the number of generating $0$-forms (that
is, polynomials) is the dimension of the action. \cite[p. 282]{ST}
From a result in complex reflection groups, this is also the
number of generating $1$-forms and $(n-1)$-forms. \cite[p.
232]{OT} Indeed, the generating $1$-forms are exterior derivatives
of the $0$-forms while the generating $(n-1)$-forms are wedge
products of $1$-forms.

\begin{prop} \label{prop:genMaps}

With $X^k_i = - 7\,x_i^k + \sum_{j\neq i} x_j^k $, the seven maps
$$
f_k(x) =
\bigl[X^k_1,X^k_2,X^k_3,X^k_4,X^k_5,X^k_6,X^k_7,X^k_8\bigr] \qquad
k=1, \dots, 7
$$
generate the module of \G{8!} equivariants over the ring of \G{8!}
invariants.

\end{prop}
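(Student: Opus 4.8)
The plan is to establish Proposition~\ref{prop:genMaps} by combining the general correspondence between invariant $(n-1)$-forms and equivariant maps with the structure theory of complex reflection groups, and then to identify explicitly the maps produced by that correspondence. First I would recall, following \cite{sextic}, that for a finite group $\G{}$ acting on $\CC{n}$ with matrix representatives, a $\G{}$-invariant $(n-1)$-form $\eta$ gives rise to a $\G{}$-equivariant polynomial map: contracting $\eta$ against the standard volume form $dx_1 \wedge \cdots \wedge dx_n$ (equivalently, writing $\eta = \sum_i (-1)^{i-1} P_i\, dx_1 \wedge \cdots \wedge \widehat{dx_i} \wedge \cdots \wedge dx_n$) yields the vector of polynomials $(P_1, \dots, P_n)$, and invariance of $\eta$ translates into equivariance of $x \mapsto [P_1, \dots, P_n]$ under the (unitary, hence determinant-$\pm 1$ up to character) action. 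I would note that this correspondence is a module isomorphism over the invariant ring, so that a set of module generators for the invariant $(n-1)$-forms maps to a set of module generators for the equivariants.

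Next I would invoke the reflection-group input. Since \G{8!} is a complex reflection group (as established in \S\ref{sec:specOrbs} via the $28$ transpositions acting as complex reflections), the cited results from \cite{ST} and \cite{OT} give that the module of invariant $1$-forms is free of rank $n = 7$ over the invariant ring, generated by the exterior derivatives $d\Phi_2, \dots, d\Phi_8$ of the basic invariants (here indexed by degrees $2$ through $8$, or equivalently by the power sums $F_2,\dots,F_8$), and likewise the module of invariant $(n-1)$-forms is free of rank $7$. Moreover \cite{OT} tells us the $(n-1)$-forms are generated by wedge products of the basic $1$-forms. In our setting $n-1 = 6$, so the natural candidates are the $7$ forms obtained by omitting one factor:
\begin{equation*}
\omega_k \;=\; d\Phi_2 \wedge \cdots \wedge \widehat{d\Phi_{k+1}} \wedge \cdots \wedge d\Phi_8, \qquad k = 1, \dots, 7,
\end{equation*}
where the hat denotes deletion. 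The claim to verify is twofold: (a) these particular $7$ wedge products generate the full $(n-1)$-form module (not merely span a submodule), and (b) under the form-to-map correspondence, $\omega_k$ (suitably normalized) is exactly $f_k = [X^k_1, \dots, X^k_8]$ with $X^k_i = -7x_i^k + \sum_{j\neq i} x_j^k$.

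For step (b), the computation is direct: restricting to the hyperplane $\sum x_\ell = 0$, one has $d F_k = k \sum_\ell x_\ell^{k-1}\, dx_\ell$, and a Jacobian/cofactor expansion of the wedge $\bigwedge_{m \neq k} dF_{m+1}$ computes the $i$-th component of the associated map as (up to a universal scalar and a factor coming from the invariant $F_1 \equiv 0$) a power sum of the $x_\ell^{\,\cdot}$ with the $i$-th term weighted differently --- precisely the shape $-7 x_i^k + \sum_{j \neq i} x_j^k$, the ``$-7$'' being the manifestation of the projection $\obar{H^T}H = (a_{ij})$ recorded in equation~(2.1) (diagonal $-7/8$ versus off-diagonal $1/8$). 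I would carry this out as a \emph{Mathematica} computation of the sort already used elsewhere in the paper, checking equivariance $f_k \circ T = T f_k$ on the generating transpositions $x_i \leftrightarrow x_{i+1}$ by hand as a sanity check. For step (a), the argument is that since the $(n-1)$-form module is free of rank $7$ and the $7$ forms $\omega_k$ are $\G{8!}$-invariant, it suffices to show they are linearly independent over the fraction field of the invariant ring --- equivalently, that the corresponding maps $f_1, \dots, f_7$ have components that are independent over that field, which follows because the $f_k$ have pairwise distinct polynomial degrees $k = 1, \dots, 7$ and none vanishes identically; a free module of rank $7$ over an integral domain containing $7$ elements independent over the fraction field has them as a basis. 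The main obstacle I anticipate is step (a): converting ``$7$ invariant generators of the correct degrees'' into ``$7$ \emph{generators}'' rather than merely ``$7$ independent elements'' requires knowing the $(n-1)$-form module is free with its generating degrees matching those of $\{f_k\}$; this is exactly what the Orlik--Solomon/Terao results in \cite{OT} supply for a reflection group (the exponents of the $(n-1)$-form module are complementary to those of the $1$-form module), so the proof reduces to citing that structure theorem correctly and matching degrees --- a bookkeeping task rather than a genuinely hard one, but the point where the argument could go wrong if the group were not a reflection group.
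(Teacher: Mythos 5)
The paper gives no explicit proof of Proposition~\ref{prop:genMaps}; it lets the surrounding text (the Shephard--Todd and Orlik--Terao citations plus the remark that the $f_k$ are projected power maps) carry the argument. Your proposal is therefore attempting to supply a proof, and it goes astray in two places.

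First, your step (b) has a degree mismatch that would surface immediately in the Mathematica check. The form $\omega_k = d\Phi_2 \wedge \cdots \wedge \widehat{d\Phi_{k+1}} \wedge \cdots \wedge d\Phi_8$ has coefficient degree $\sum_{m\neq k+1}(m-1) = 28-k$ (so $21, \dots, 27$), while $f_k$ has degree $k$ (so $1, \dots, 7$). The contraction $\xi \mapsto \iota_\xi(dx_1\wedge\cdots\wedge dx_7)$ preserves degree, so $\omega_k$ cannot be the form corresponding to $f_k$. There is also a character mismatch: $\omega_k$, being a wedge of absolutely invariant $1$-forms, is absolutely invariant, whereas $\iota_{f_k}(\mathrm{vol})$ is only invariant up to the determinant (sign) character. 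The objects that actually produce the $f_k$ are the invariant $1$-forms $d\Phi_{k+1}$ of degree $k$, converted to equivariants via the self-duality of the orthogonal permutation representation --- this is exactly the paper's formal-gradient Proposition 3.2, $f(x)=\nabla_x F(x)$, and is the mechanism you should invoke rather than the $(n-1)$-form wedges.

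Second, your step (a) criterion is false as stated: a free module of rank $7$ over a domain can contain $7$ elements independent over the fraction field that fail to generate (already $\{2\} \subset \mathbf{Z}$ shows this in rank one). What saves the argument --- and what you gesture at with ``matching degrees'' without actually carrying out --- is the graded version: the OT theorem gives a \emph{homogeneous} basis of degrees exactly $1,\dots,7$, and since the $f_k$ are homogeneous, nonzero, and equivariant of those same degrees $1,\dots,7$, an inductive degree-by-degree comparison (a degree-$d$ element of a graded free module is supported only on generators of degree $\leq d$) forces the $f_k$ to be a basis. You need to make that inductive bookkeeping explicit rather than appeal to fraction-field independence alone.
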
 \noindent

These maps are projections onto the hyperplane $\HH_x$ along
$[1,1,1,1,1,1,1,1]$ of the power maps
$$\bigl[x_1^k,x_2^k,x_3^k,x_4^k,x_5^k,x_6^k,x_7^k,x_8^k\bigr].$$

\begin{prop}

Under an orthogonal action an invariant $F(x)$ gives rise to an
equivariant $f(x)$ by means of a formal gradient
$$f(x) = \nabla_x F(x) =
\biggl[\frac{\partial F}{\partial x_1}(x),\dots,
 \frac{\partial F}{\partial x_n}(x)\biggr].$$

\end{prop}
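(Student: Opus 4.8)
The plan is to obtain the equivariance by differentiating the invariance identity and then invoking orthogonality. Fix $T \in \G{8!}$ (or, in the general statement, any representing matrix with $T^T T = I$) and start from $F(Tx) = F(x)$, valid for all $x$. Differentiating the left-hand side by the chain rule, the $i$-th partial derivative of $x \mapsto F(Tx)$ is $\sum_j (\partial_j F)(Tx)\,T_{ji}$, so $\nabla_x\!\bigl(F(T\,\cdot)\bigr)(x) = T^T\,(\nabla F)(Tx)$. Invariance forces this to coincide with $\nabla F(x)$, hence $T^T\,(\nabla F)(Tx) = \nabla F(x)$, i.e. $(\nabla F)(Tx) = (T^T)^{-1}\nabla F(x)$. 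The orthogonality hypothesis is precisely $(T^T)^{-1} = T$, so writing $f = \nabla F$ this reads $f(Tx) = T f(x)$, which is the asserted equivariance.

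A couple of routine checks then place this in the projective and $\Sym{8}$-specific setting. First, if $F$ is homogeneous of degree $d$, then $\nabla F$ is homogeneous of degree $d-1$, so $f$ descends to a well-defined rational self-map of the ambient projective space. Second, for the permutation representation the natural generating invariants (the power sums $F_k$, equivalently the forms $\Phi_k$) are functions on $\CC{8}$ and their gradients---for instance $\nabla F_k = [\,k\,x_1^{k-1},\dots,k\,x_8^{k-1}\,]$---need not be tangent to $\HH_x$; one postcomposes with the orthogonal projection onto $\HH_x$ along $[1,\dots,1]$, which is itself $\Sym{8}$-equivariant because the permutation matrices commute with that projection, and this recovers exactly the equivariants $f_k$ of Proposition~\ref{prop:genMaps}. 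I would also remark that this is just the $0$-form/$1$-form instance of the exterior-algebra correspondence used before Proposition~\ref{prop:genMaps}: $\nabla F$ is the vector field metrically dual to the invariant $1$-form $dF$.

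The only point that needs genuine care---and thus the ``hard part,'' though it is really a matter of conventions---is the interplay between transposition and complex conjugation. The argument uses $T^{-1} = T^T$, which holds here because the permutation matrices, and their restriction to $\HH_x$, are \emph{real} orthogonal, not merely unitary. Were the representing matrices genuinely complex unitary, the holomorphic gradient would transform by $\obar{T}$ rather than $T$, yielding equivariance for the conjugate action instead; so the essential hypothesis is the realness (orthogonality) of the representation, exactly as assumed. Once that is in hand, the proof is the one-line chain-rule computation above.
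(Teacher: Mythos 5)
Your proof is correct: differentiating the invariance identity $F(Tx)=F(x)$ and using $T^{T}=T^{-1}$ is exactly the standard argument, and the paper itself gives no argument here beyond citing \cite{quintic}, where this same chain-rule computation appears. Your added remarks on the projection onto $\HH_x$ and on the orthogonal-versus-unitary distinction match the paper's own caveats following the proposition.
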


\begin{proof}
See \cite{quintic}.
\end{proof}

Note that the \G{8!}-equivariant $f_k(x)$ is \emph{not} equal to $\nabla_x
F_{k+1}(x)$, but is a multiple of
$$\nabla_x F_{k+1}(x)|_{x_i^k \rightarrow X_i^k}.$$
While this may be a source of confusion, it does not cause problems since
we are working on the hyperplane $\HH_x$. When using hyperplane
coordinates on $\HH_u$, the discrepancy manifests itself in the appearance
of a constant $-\tfrac{8}{k+1}$ for each map $\phi_k(u)$. (See below.)

Recalling the change of coordinates from $\HH_x$ to $\HH_u$, a map on
$\HH_x$ expresses itself as a map
$$\phi(u) = H f(\obar{H^T} u)$$
on $\HH_u$.  Having these maps in terms of the basic $u$-invariants
$\Phi_k(u)$ will be useful.

\begin{defn} \label{df:R}

Let
$$
R = (r_{ij}) =
\begin{cases} 1 & i+j=8\\0 & \text{otherwise} \end{cases}
\quad \text{and} \quad \nabla_u^r F(u) = R\,\nabla_u F(u)
$$
represent the \emph{reversed identity} and \emph{reversed gradient}.

\end{defn}

\begin{prop} \label{prop:Hmaps}

In $\HH_u$ coordinates, the map $\phi(u) = H\,f(\obar{H^T} u)$ is
given by
$$\phi(u) = \nabla_u^r \Phi(u)$$
where $\Phi(u) = F(\obar{H^T} u) = F(x)$ and $f(x)=\nabla_x F(x)$.

\end{prop}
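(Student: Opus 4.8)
The plan is to trace through the chain of coordinate changes and use the orthogonality relations in $(2.1)$, together with the earlier proposition identifying $f = \nabla_x F$ under an orthogonal action. First I would write out the definition: $\phi(u) = H\,f(\obar{H^T}u) = H\,\nabla_x F(x)\big|_{x = \obar{H^T}u}$. The goal is to show this equals $R\,\nabla_u\Phi(u)$, where $\Phi(u) = F(\obar{H^T}u)$. So the core computation is the chain rule: $\nabla_u\Phi(u) = \nabla_u\bigl(F(\obar{H^T}u)\bigr)$, which by the chain rule equals $\overline{(\obar{H^T})^T}\,\nabla_x F$ evaluated at $x = \obar{H^T}u$ — that is, the Jacobian of the substitution $x = \obar{H^T}u$ transposed, applied to the $x$-gradient. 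Since $\obar{H^T}$ has constant entries, its Jacobian with respect to $u$ is just $\obar{H^T}$ itself, and the transpose appearing in the chain rule for gradients gives $\obar{H^T}{}^T = \obar{H}$. Hence $\nabla_u\Phi(u) = \obar{H}\,\nabla_x F(\obar{H^T}u)$.

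Next I would compare the two expressions. We have $\phi(u) = H\,\nabla_x F$ and $\nabla_u\Phi(u) = \obar{H}\,\nabla_x F$ (both with $x = \obar{H^T}u$). So it suffices to show $H = R\,\obar{H}$, i.e. that conjugating $H$ entrywise equals reversing its rows via the reversed-identity $R$ of Definition \ref{df:R}. This is a direct check on the explicit matrix $H$: with $\w{} = e^{\pi i/4}$, complex conjugation sends $\w{}^j \mapsto \w{}^{-j} = \w{}^{8-j}$ and fixes $\pm 1$, $\pm i \mapsto \mp i$; comparing the entries of row $i$ of $\obar{H}$ with the entries of row $8-i$ of $H$ should match term by term. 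One should note the index convention: $H$ is $7\times 8$ with rows indexed $1,\dots,7$, and $R$ here is the $7\times 7$ reversed identity $r_{ij} = 1$ iff $i+j = 8$, so $(R\obar H)$ has its $i$th row equal to the $(8-i)$th row of $\obar H$ — the claim is precisely that this reproduces $H$. The discrepancy constant $-\tfrac{8}{k+1}$ flagged in the text before the proposition is a consequence of the remark that $f_k$ is only a multiple of the restricted gradient, and I would absorb it into the convention that $f = \nabla_x F$ exactly (as the proposition hypothesizes), so it does not enter this argument.

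The main obstacle is purely bookkeeping: getting the gradient chain rule conventions right (gradient as a column vs. row, and whether the Jacobian or its transpose appears) so that the matrix identity comes out as $H = R\,\obar H$ rather than, say, $H^T = \obar H\, R$ or some mismatched shape — the $7\times 8$ versus $7\times 7$ sizing of $H$ and $R$ makes this easy to botch. Once the identity $H = R\,\obar H$ is pinned down, verifying it on the displayed matrix is a finite, routine check using only $\obar{\w{}} = \w{}^7$ and $\w{}^8 = 1$, and the proposition follows. I would close by remarking that this is exactly the origin of the ``reversed gradient'' operator $\nabla_u^r$: the orthogonal-but-not-real change of basis $H$ converts an ordinary gradient in $x$ into a row-reversed gradient in $u$.
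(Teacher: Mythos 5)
Your argument is correct and is the natural one: the paper itself gives no proof, only the citation to the quintic paper, and the holomorphic chain rule $\nabla_u\Phi = \obar{H}\,\nabla_x F$ combined with the entrywise verification $H = R\,\obar{H}$ (row $i$ of $H$ equals the conjugate of row $8-i$, using $\obar{\w{}}=\w{}^{7}$) is exactly what fills in that reference. One small caution worth noting: your first-pass expression $\overline{(\obar{H^T})^T}$ would evaluate to $H$ rather than $\obar{H}$ — the holomorphic chain rule introduces a transpose of the constant Jacobian but \emph{no} additional conjugation — though you immediately correct this to $(\obar{H^T})^T = \obar{H}$, which is the right formula and makes the rest of the argument go through.
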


\begin{proof}
\cite{quintic}.
\end{proof}

Thus, the generating equivariants $\phi_k(u) = H f_k(\obar{H^T}
u)$ are
\begin{equation} \label{eq:equiv}
\phi_k(u) = -\frac{8}{k+1}\,\nabla_u^r \Phi_{k+1}(u).
\end{equation}
Although the factors $-\tfrac{8}{k+1}$ have no projective effect on the
maps individually, they do play a role when forming combinations of maps
from parametrized families that do not have a common degree.  (See
Section~\ref{sec:S8equivFam}.) Explicit expressions for the maps of
degrees one and two are
\begin{align*}
\phi_1(u) =&\ -8\,[u_1,u_2,u_3,u_4,u_5,u_6,u_7] \\
\phi_2(u) =&\ -2\,\sqrt{2}\,\bigl[ 2\,( u_4\,u_5 + u_3\,u_6 +
u_2\,u_7 ),
u_1^2 + u_5^2 + 2\,u_4\,u_6 + 2\,u_3\,u_7,\\
&\ 2\,( u_1\,u_2 + u_5\,u_6 + u_4\,u_7 ), u_2^2 + 2\,u_1\,u_3 +
u_6^2 + 2\,u_5\,u_7,
2\,( u_2\,u_3 + u_1\,u_4 + u_6\,u_7 ),\\
&\ u_3^2 + 2\,u_2\,u_4 + 2\,u_1\,u_5 + u_7^2, 2\,( u_3\,u_4 +
u_2\,u_5 + u_1\,u_6 ) \bigr].
\end{align*}
The lengthy results for the remaining maps are available at
\cite{CrassWeb}.

%----------
\subsection{A fixed point property}

For a \G{8!}-equivariant $f$ and a point $a$ that an element $T
\in \G{8!}$ fixes,
$$T\,f(a) = f(Ta) = f(a).$$
Hence, equivariants preserve fixed points of a group element.

Being pointwise fixed by the involution
$$x_i \longleftrightarrow x_j,$$
a $28$-hyperplane
$$\LL{5}{28_{ij}} = \{x_i - x_j =0\}$$
either maps to itself or collapses to its companion $28$-point
$$
\p{28}{ij} = [\dots 0 \dots,\overbrace{1}^{i},\dots 0 \dots,
 \overbrace{-1}^{j},\dots 0 \dots]
\notin \LL{5}{28_{ij}}.
$$
In the former generic case, the map preserves the planes and lines
that are intersections of $28$-hyperplanes.  (See
Tables~\ref{tab:planes} and \ref{tab:lines}.)

%----------
\subsection{Families of equivariants}

The \G{8!} equivariants form a degree-graded module over the
\G{8!} invariants. This means that for an invariant $F_\ell$ and
equivariant $g_m$ of degrees $\ell$ and $m$, the product
$$F_\ell \cdot g_m$$
is an equivariant of degree $\ell + m$. When looking for a map in
a certain degree $k$ with special geometric or dynamical
properties, my approach is to express the entire family of
``$k$-maps" and by manipulation of parameters, locate a subfamily
with the desired behavior.

%----------
\subsection{A special map in degree four} \label{sec:4map}

In the configuration of $28$-lines \LL{1}{28_{ij}} each $8$-point lies at
the intersection of seven lines while each \LL{1}{28_{ij}} contains the
points \p{8}{i} and \p{8}{j}. (See Section~\ref{sec:specOrbs}.) Moreover,
these are the only intersections of $28$-lines. We can attempt to exploit
this structure by looking for a map with superattracting ``pipes" along
the $28$-lines: this means that, at each point on the line, the map is
critical in all five ``off-line" directions. Under such a map, the
$8$-points would be superattracting in all directions.  In addition, we
want the map's degree to be as small as possible.  Degrees two and three
avail us of too few parameters.

The family of $4$-maps has (homogeneous) dimension three:
$$
\alpha_1\,\Phi_3\,\phi_1 + \alpha_2\,\Phi_2\,\phi_2 +
\alpha_3\,\phi_4.
$$
Of course, choosing two parameter values determines a map on projective
space. Obtaining a map $g_4$ for which the $28$-lines are critical in the
off-line directions requires two parameters.  With the third parameter, we
choose a lift of the map to
\CC{7} that fixes the $8$-points. The result is
\begin{equation} \label{eq:g4}
g_4 = - 2\,\Phi_3\,\phi_1 - 9\,\Phi_2\,\phi_2  + 84\,\phi_4.
\end{equation}

The central dynamical role played by the $8$-points suggests that
a good choice of coordinates for this map places these points at
$$[1,0,0,0,0,0,0],\ \dots\ ,[0,0,0,0,0,0,1],[1,1,1,1,1,1,1].$$
Using $[v_1,\dots,v_7]$ for this system, the map takes the form
$$g_4(v) = [v_1\,T_1(v),\dots,v_7\,T_7(v)]$$
where
$$
T_k(v) = 7\,v_k^3 -4\,v_k^2\,S_1(\widehat{v_k}) +
2\,v_k\,S_2(\widehat{v_k}) - S_3(\widehat{v_k}),
$$
$S_k$ is the degree-$k$ elementary symmetric function in six variables,
and
$$\widehat{v_k} = (\dots,v_{k-1},v_{k+1},\dots).$$

In the $v$-coordinates, the equations
$$v_i=x,\ v_j=y,\ v_k=0\qquad \text{for}\ k \neq  i,j$$
determine $21$ of the $28$-lines \LL{1}{28_{ij}}. For all $k$, each term
in $S_2(\widehat{v_k})$ and $S_3(\widehat{v_k})$ contains at least one
$v_\ell\ (\ell \neq i,j)$.  Thus,
$$
S_2(\widehat{v_k})|_{\LL{1}{28_{ij}}} = 0 \quad \text{and} \quad
S_3(\widehat{v_k})|_{\LL{1}{28_{ij}}} = 0
$$
so that in the inhomogeneous coordinate $z=\tfrac{x}{y}$ on the line the
map restricts to
$$z \longrightarrow -z^3\,\frac{7\,z-4}{4\,z-7}$$
while the pair of $8$-points appear at $0$ and $\infty$.

\begin{prop}

The $8$-points \p{8}{i}, \p{8}{j} are the attractor for the restriction $g
= g_4|_{\LL{1}{28_{ij}}}$. Furthermore, in the coordinates used above, the
Julia set $J_g$ is the unit circle.

\end{prop}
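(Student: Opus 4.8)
The plan is to recognize the one‑dimensional restriction
$$R(z) = -z^{3}\,\frac{7z-4}{4z-7}$$
as a finite Blaschke product and then read off the dynamics from classical one‑variable theory. The first step is an algebraic identification: writing $7z-4 = 7\bigl(z-\tfrac{4}{7}\bigr)$ and $4z-7 = -7\bigl(1-\tfrac{4}{7}z\bigr)$ gives
$$R(z) = z^{3}\cdot\frac{z-\tfrac{4}{7}}{1-\tfrac{4}{7}\,z},$$
and since $\tfrac{4}{7}\in(0,1)$ this is a Blaschke product of degree $4$: a triple zero at $0$, a simple zero at $\tfrac{4}{7}$, and reciprocal poles at $\infty$ (order three) and $\tfrac{7}{4}$. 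Two facts are then immediate from this form. On the unit circle one has $|1-\tfrac{4}{7}z| = |z-\tfrac{4}{7}|$, so $|R(z)| = |z|^{3} = 1$ there; hence $S^{1}$ is forward invariant, $R$ carries the open disk $\mathbf{D}$ into itself and the exterior $\mathbf{D}^{\ast}$ into itself. Also $R(1/z) = 1/R(z)$, so the involution $z\mapsto 1/z$---the residual \Z{2} acting on \LL{1}{28_{ij}} recorded in \tab{tab:lines}---commutes with $R$ and swaps $\mathbf{D}$ with $\mathbf{D}^{\ast}$, interchanging the two $8$-points, which sit at $0$ and $\infty$.

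Next I would pin down the fixed points. Expanding near the origin, $R(z) = -\tfrac{4}{7}z^{3}+O(z^{4})$, so $0$ is a superattracting fixed point of local degree $3$, and by the $z\mapsto 1/z$ symmetry so is $\infty$; these are \p{8}{i} and \p{8}{j} in the coordinates in force. Since $R\colon\mathbf{D}\to\mathbf{D}$ is a holomorphic self‑map of the disk that is not an automorphism (its degree is $4$) and fixes $0$, the Schwarz lemma---equivalently the Denjoy--Wolff theorem---gives $R^{n}\to 0$ locally uniformly on $\mathbf{D}$; thus $\mathbf{D}$ lies in the basin of the $8$-point at $0$, and symmetrically $\mathbf{D}^{\ast}$ lies in that of the $8$-point at $\infty$. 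Because orbits starting on $S^{1}$ stay on $S^{1}$ and orbits starting in $\mathbf{D}^{\ast}$ stay in $\mathbf{D}^{\ast}$, the basin of $0$ is exactly $\mathbf{D}$, so $\partial\mathbf{D}=S^{1}$ lies in the Julia set $J_{g}$.

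Finally, $\mathbf{D}$ and $\mathbf{D}^{\ast}$ are components of the Fatou set (attracting basins) and their union is $\CP{1}\setminus S^{1}$; since $S^{1}\subseteq J_{g}$ has empty interior there is no room for a further Fatou component. Hence the Fatou set is precisely $\mathbf{D}\sqcup\mathbf{D}^{\ast}$, the attractor of $g$ is the pair $\{\p{8}{i},\p{8}{j}\}$ with basin $\CP{1}\setminus S^{1}$ of full measure, and $J_{g}=S^{1}$ is the unit circle. The computations are elementary, so there is no serious obstacle; the only point needing a little care is the passage from ``$0$ is superattracting'' to ``all of $\mathbf{D}$ lies in its basin'' (the Schwarz/Denjoy--Wolff step) together with the complementary remark that the two round disks exhaust the Fatou set. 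One may instead finish in a single stroke by citing the classical fact that a finite Blaschke product of degree $\ge 2$ possessing a fixed point in $\mathbf{D}$ has the unit circle as its Julia set.
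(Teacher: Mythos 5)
Your argument is correct, and the key geometric observation---that the M\"obius factor maps the disk to itself and that the residual $\Z{2}$ symmetry $z\mapsto 1/z$ swaps the two basins---is exactly the one the paper uses. Where you differ is in how the disk is shown to lie in the basin of $0$. The paper works with the explicit inequality: since $\bigl|\tfrac{7z-4}{4z-7}\bigr|<1$ on $\mathbf{D}$, one gets $|g(z)|<|z|^3$ for $|z|<1$, and iterating gives $|g^n(z)|<|z|^{3^n}\to 0$, a quantitative, superexponential bound. You instead rewrite the map as a Blaschke product $z^3\,\tfrac{z-4/7}{1-(4/7)z}$ and invoke the Schwarz lemma (equivalently Denjoy--Wolff) for a non-automorphism self-map of $\mathbf{D}$ fixing the origin. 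Both are valid; the paper's route buys an explicit convergence rate (which, as the introduction notes, matters for the numerics), while yours buys a one-line reduction to classical one-variable theory and the pleasant endgame of citing the standard fact that a degree $\ge 2$ Blaschke product with an interior fixed point has $S^1$ as its Julia set. Your closing step---$\mathbf{D}\sqcup\mathbf{D}^{\ast}$ exhausts the Fatou set because the basin boundary $S^1$ is in $J_g$ and has empty interior---is a cleaner justification than the paper's compressed appeal to complete invariance of $S^1$, and it is worth keeping. The only small hygiene point: when you go from ``$R$ is not an automorphism and fixes $0$'' to ``$R^n\to 0$ locally uniformly,'' you should either note that strict Schwarz ($|R(z)|<|z|$ for $z\ne 0$) plus compactness of $\{|z|\le r\}$ gives the claim, or simply lean on Denjoy--Wolff as you do; either is fine, but a reader wants one of them made explicit.
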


\begin{proof}

Since the mobius transformation
$$\frac{7\,z-4}{4\,z-7}$$
preserves the unit disk,
$$|g(z)| < |z|^3$$
for $|z|<1$.  If $|z|<1$, iteration yields
$$
|g^n(z)|<|g^{n-1}(z)|^3< \dots < |g(z)|^{3^{n-1}}<|z|^{3^n}.
$$
Thus, every point in the disk belongs to the basin of the superattracting
fixed point $0$. Since $g$ is symmetric under
$$z \longrightarrow \frac{1}{z},$$
the basin of $\infty$ is $\{|z|>1\}$.

The complete invariance of $\{|z|=1\}$ implies that it contains
and, indeed, is the map's Julia set.

\end{proof}

Thus, the basins of the $8$-points contain the $28$-lines excepting the
\RP{1} equator---the unit circle in the coordinates above.  Along this
circle $C$, $g_4$ has periodic, preperiodic, and what we might call
chaotic saddle points.  Attached to each point $z$ on $C$ is a
$5$-dimensional stable manifold $W_z$ consisting of points attracted to
$C$, that is, to the trajectory of $z$.  Locally, the stable manifolds are
mutually disjoint and, collectively over the circle, give a stable
manifold $W_C$ of $C$ whose real-dimension is eleven and that belongs to
the basin boundaries of the pair of $8$-points.  We can see (real)
$2$-dimensional slices of this stable manifold by plotting basins of
attraction on spaces that are $g_4$-invariant and intersect $W_C$. (See
\app{app:basins}.)

By construction, $g_4$ self-maps each \Sym{6}-symmetric $28$-hyperplane
\LL{5}{28_{ij}} and hence, preserves each \CP{1} and \CP{2} intersection
of hyperplanes.  Denote these $1$- and $2$-dimensional spaces by \LL{1}{m}
and \LL{2}{n}. Furthermore, $g_4$ is equivariant under the antiholomorphic
transformation
$$x \longrightarrow \bar{x}$$
and, thereby, preserves\RR---the \Sym{8}-symmetric \RP{6}. We can get a
picture of the map's \emph{restricted dynamics} by plotting basins of
attraction on
\LL{1}{m}---a \CP{1}---and on the \RP{2} intersections
$$\LL{2}{n} \cap \RR.$$
The basin portraits appear in \app{app:basins}. Graphical and
experimental evidence support a claim of reliability for $g_4$.

\begin{conj} \label{conj:g4}

The attractor for $g_4$ is the $8$-point orbit.

\end{conj}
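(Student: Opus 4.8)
The plan is to split Conjecture~\ref{conj:g4} into three assertions of increasing difficulty: (i) the $8$-point orbit is an attractor; (ii) it is the only attractor; (iii) its basin has full measure (or, for the weak form, is dense). Assertion (i) holds by construction, since the $8$-points are superattracting in all six directions. The leverage for (ii) and (iii) is the invariant architecture of $g_4$ already exhibited: it self-maps every $28$-hyperplane $\LL{5}{28_{ij}}$, hence every \CP{1} and \CP{2} intersection $\LL{1}{m}$, $\LL{2}{n}$ of such hyperplanes---in particular the $28$-lines $\LL{1}{28_{ij}}$ along which it has superattracting pipes---and it commutes with $x \mapsto \bar{x}$, hence preserves \RR. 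The idea is to prove reliability first on this invariant skeleton, then propagate it into an absorbing neighborhood of the $8$-point orbit by normal-hyperbolicity estimates, and only then confront the global statement.

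\emph{Step 1: restricted dynamics and periodic data.} I would analyze $g_4$ on the invariant subvarieties by induction on dimension. The one-dimensional case is the proposition just proved: on each $\LL{1}{28_{ij}}$ the map is $z\mapsto -z^3(7z-4)/(4z-7)$ with the two $8$-points as attractor and the unit circle as Julia set. Next, for each $g_4$-invariant \CP{1} and \CP{2} listed in Tables~\ref{tab:lines} and~\ref{tab:planes} and for each real form $\LL{2}{n}\cap\RR$, compute explicitly the restricted map's critical set, fixed points, and cycles of low period---these are one- and two-variable computations---with the aim of showing that on each such subvariety the only attractor is the set of $8$-points it contains, and in particular that no attracting cycle hides on the equatorial circles of the $28$-lines or on the real forms. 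In parallel, enumerate all fixed points of $g_4$ on \CP{6} and compute their multiplier spectra, checking that every fixed point other than an $8$-point has an expanding direction, and run a bounded search for attracting $2$- and $3$-cycles. This is finite, if laborious, and rules out competing attractors near the skeleton.

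\emph{Step 2: from the skeleton to an absorbing region.} The union $\Gamma$ of the $28$-lines is superattracting in its five normal directions away from the equatorial circles, and the $8$-points are superattracting in \CP{6}. Using the $v$-coordinates, in which $g_4(v)=[v_1T_1(v),\dots,v_7T_7(v)]$ and the terms $S_2(\widehat{v_k})$, $S_3(\widehat{v_k})$ vanish on the lines, I would track the off-line quadratic terms to produce quantitative contraction: a thin polydisc neighborhood of the segment of $\LL{1}{28_{ij}}$ running from $\p{8}{i}$ toward the circle $C$ is carried strictly inside a smaller such neighborhood, so iteration drives it into the union of the two superattracting basins. Equivariance under $x\mapsto\bar{x}$ and under \G{8!} reduces this to a single line segment and yields an open, forward-invariant region $U$ with $\bigcap_{n} g_4^{n}(U)$ equal to the $8$-point orbit.

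\emph{Step 3 (the main obstacle): the global claim.} The crux---and the reason the statement is left as a conjecture---is to show that almost every point of \CP{6} (a dense set, for the weak version) eventually enters $U$. I see three routes. (a) A Lyapunov function: search for a real-valued function built from ratios of the invariants $\Phi_k$, or from the symmetric functions of the $v_k$---well defined on \CP{6}---that $g_4$ strictly decreases off the $8$-point orbit; the fixed-point and subvariety data from Step~1 would constrain the search, but manufacturing such a function (or proving one is unnecessary) is precisely where I expect to be stuck. (b) Pluripotential theory: use the Green current of $g_4$ and its self-intersections to decompose \CP{6} into a Fatou part and the support of the equilibrium measure, and try to show the only Fatou components are the $8$-point basins while their complement is pluripolar, hence of measure zero. (c) Postcritical finiteness: compute the critical hypersurface of $g_4$ (degree $21$) and test whether its forward orbit lands inside a proper $g_4$-invariant algebraic set---the union of the $28$-hyperplanes or of the $28$-lines being the natural candidates; should $g_4$ prove postcritically algebraic, the rigidity and expansion theorems for such maps on \CP{n} (Ueda; Fornaess--Sibony; Jonsson) would force hyperbolic behavior off that set, and combining this with Steps~1--2 and the fact that the stable manifold $W_C$ has real codimension one---so that it and its countably many preimages are null---would close the argument. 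Route (c) is the most promising unconditional path, contingent on the postcritical orbit actually being algebraic; routes (a) and (b) are the fallback, with (a) the single hardest ingredient.
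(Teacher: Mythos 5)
The statement you are addressing is, in the paper, an explicit conjecture: the author supports it only with ``graphical and experimental evidence'' (the basin portraits of \app{app:basins}) and offers no proof. So there is no paper argument to compare against, and you correctly recognize the situation---your Step~3 candidly admits that the global claim is ``precisely where I expect to be stuck'' and identifies this as ``the reason the statement is left as a conjecture.'' What you have written is therefore not a proof but an honest research program, and as such it is reasonable: your Step~1 (restricted dynamics on the invariant \CP{1}'s, \CP{2}'s, and real forms, plus an enumeration of fixed points and low-period cycles) is essentially what the paper actually does, albeit graphically rather than by multiplier computation, and your Step~2 (building an absorbing neighborhood of the $8$-point orbit by tracking off-line contraction along the $28$-lines in the $v$-coordinates) is a sensible quantitative strengthening of the superattracting-pipes construction already in Section~\ref{sec:4map}.

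One of your three proposed routes for Step~3, however, is explicitly undercut by a remark in the paper itself. You flag route~(c)---postcritical algebraicity plus rigidity and expansion theorems for critically finite maps on \CP{n}---as ``the most promising unconditional path.'' But in Section~\ref{sec:gen} the author remarks that the degree-$4$ \G{5!} equivariant ``has a kind of \emph{critical finiteness} that other \G{n!} $4$-maps do not share.'' The map $g_4$ for $n=8$ is one of those other maps: its postcritical set is not expected to stabilize inside a proper invariant algebraic subset. The critical hypersurface has degree $21$ and, while it contains the $28$-lines and blows down various invariant strata (the appendix notes, for instance, that each $\LL{3}{105}$ blows down to $\LL{2}{105}$ and that certain $28$-points blow up onto hyperplanes), nothing in the paper suggests the full postcritical orbit is algebraic, and the author's own comment points the other way. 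So route~(c) should not be labeled the promising one; the burden falls on routes~(a) and~(b), or on something the paper does not hint at. This does not invalidate your plan---it simply means the obstacle you already identified is, if anything, sharper than your phrasing suggests, which is consistent with the paper's decision to leave the statement conjectural.
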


\begin{conj}

Under $g_4$, the basins of the $8$-points fill up \CP{6} in
measure.

\end{conj}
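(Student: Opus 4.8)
The plan is to establish \emph{measure-reliability}: that the \emph{exceptional set}
$$E = \CP{6} \setminus \bigcup_{i=1}^8 B_i, \qquad B_i = \text{basin of } \p{8}{i},$$
is Lebesgue-null. Granting Conjecture~\ref{conj:g4}---that the $8$-point orbit is the only attractor---the task becomes showing that the points failing to be attracted to it form a null set, and the natural candidate for $E$ (up to a set of measure zero) is the closure of the full stable set of the \RP{1}-equator circle $C$ on the $28$-lines (Section~\ref{sec:4map}), together with the indeterminacy locus $I(g_4)$ and the union $\bigcup_{n\geq 0} g_4^{-n}\bigl(W_C \cup I(g_4)\bigr)$ of backward orbits of these.

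The first technical step is to compute $\mathrm{Crit}(g_4)$ and $I(g_4)$ explicitly---feasible from \eqref{eq:g4} or from the $v$-coordinate form $g_4(v) = [v_1\,T_1(v),\dots,v_7\,T_7(v)]$. By construction the $28$-lines \LL{1}{28_{ij}} are superattracting pipes, so a substantial portion of $\mathrm{Crit}(g_4)$ is swept onto the $8$-points, hence into $\bigcup_i B_i$. I would then decompose the remaining critical locus into \G{8!}-orbits of irreducible components and, for one representative per orbit, compute its forward image and show it eventually enters $\bigcup_i B_i$ or one of the invariant hyperplanes \LL{5}{28_{ij}}. On \LL{5}{28_{ij}}---a \CP{5} carrying an \Sym{6}-action---the restricted map is a lower-dimensional instance of the same construction, so the whole argument is organized as an induction on dimension whose base cases are the one-complex-dimensional restrictions (invariant \CP{1}'s such as \LL{1}{m} and \MM{1}{280}, already handled by one-variable methods) and the \RP{2} slices $\LL{2}{n} \cap \RR$ pictured in \app{app:basins}. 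The outcome sought is that the postcritical set $\overline{\bigcup_{n\geq 0} g_4^n\bigl(\mathrm{Crit}(g_4) \cup I(g_4)\bigr)}$ is a finite union of (various-dimensional) stable sets of type $W_C$, proper algebraic subvarieties, and their countably many $g_4$-preimages.

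With the postcritical set so controlled, one concludes in the standard way: off its closure the iterates $g_4^n$ form a normal family, giving a Fatou set, and the superattracting pipes along the $28$-lines force every Fatou component to be carried into some $B_i$, since a Fatou component must accumulate on the unique attractor. Hence $E$ lies in the closure of the postcritical set, which by the previous paragraph is a countable union of sets of real codimension at least one---a null set---and measure-reliability follows.

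The main obstacle is $W_C$ itself. Transverse to $C$ the map exhibits ``chaotic saddles'' with an $11$-dimensional stable manifold $W_z$ at each $z \in C$, and $W_C = \bigcup_{z \in C} W_z$. Making the heuristic rigorous requires (i) a \emph{uniform} hyperbolicity (dominated-splitting) estimate along $C$, so that $W_C$ is an honest real-analytic lamination and $\bigcup_n g_4^{-n}(W_C)$ is genuinely null rather than, say, dense-with-positive-measure; (ii) confirmation that $I(g_4)$ and its backward orbit form a proper subvariety with no unexpected collapse; and---most seriously---(iii) a no-wandering-domain type result in dimension six that rules out Fatou components, or positive-measure invariant sets, not attracted to the $8$-points. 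The last item is exactly the kind of statement unavailable for general holomorphic dynamics in several variables, which is why the assertion remains conjectural. A realistic partial goal is the weak ``density of basins'' version (that $E$ has empty interior), or a Fubini/transversality argument upgrading full measure on each invariant \CP{1} and \RP{2} slice (where one-variable theory supplies it) to full measure on a sufficiently large family of such slices; closing the gap in the full $6$-dimensional space is the genuine difficulty.
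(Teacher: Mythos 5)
There is no proof to compare against: in the paper this statement is precisely a \emph{conjecture}, supported only by the basin portraits in \app{app:basins} and by numerical experiment, and it remains open. Your proposal is therefore best judged as a research outline, and to your credit you diagnose its own fatal gap in the final paragraph: item (iii), the absence of any no-wandering-domain or classification-of-Fatou-components theorem for rational maps of \CP{6}, is exactly what blocks every route of this shape. Nothing in the proposal closes that gap, so nothing here constitutes a proof.

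Beyond that global obstruction, two intermediate steps are asserted where they would actually fail or need substantial new input. First, the inference ``off the closure of the postcritical set the iterates form a normal family'' is a one-variable reflex (Montel plus the omitted-values criterion) with no analogue in \CP{6}; in several variables the complement of the postcritical closure need not lie in the Fatou set, and even where it does, Fatou components need not be attracted to an attracting cycle---they can be rotation domains or can converge to invariant pieces of the Julia set, which is exactly what the positive-measure alternative for $W_C$ would look like. Second, the measure count at the end is circular: you conclude that $E$ lies in the \emph{closure} of a countable union of real-codimension-one sets and then treat that closure as null, but the closure of a dense null set can be all of \CP{6}; ruling this out is your own item (i), the uniform hyperbolicity of $g_4$ transverse to the equator circles $C$, which is not established anywhere (the paper only exhibits periodic, preperiodic, and ``chaotic'' saddle behavior on $C$ empirically). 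A smaller issue: the proposed induction on dimension through the hyperplanes \LL{5}{28_{ij}} does not literally reduce to ``the same construction one dimension down,'' since the restricted action on $\{x_i=x_j\}$ is the $\Sym{6}\times\Z{2}$ action of \tab{tab:RP6}, not the standard $\Sym{7}$ permutation action on \CP{5}, so the restricted map is not the distinguished $4$-map of Proposition~\ref{prop:g4Crit} for $n=7$. The honest conclusion is the one you reach yourself: the statement should be left as a conjecture, with the weak (density) version and the slice-by-slice Fubini argument as the realistic partial targets.
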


Finally, since $g_4$ has real coefficients (see \eqref{eq:g4}), it
preserves the \RP{6} whose points have real $u$ coordinates. This is
\emph{not} the \Sym{8}-symmetric \RR. Rather it has the \Sym{7} symmetry
of \p{8}{1} which is $[1,1,1,1,1,1,1]$ in the $u$ space.  Accordingly,
there are eight spaces of this type.

\section{Solving the octic} \label{sec:octic}

To compute a root of a polynomial, one must overcome its symmetry.
For a general equation of degree $n$ the obstacle is \Sym{n}.
Klein described a means to this end: given values for an
independent set of \Sym{n}-invariant homogeneous polynomials
$$a_1=G_1(x)\ \dots\ a_m=G_m(x),$$
find the \Sym{n} orbits of solutions $x$ to these equations.
\cite[pp. 69ff]{Klein} This task of inverting the $G_k$ is the
\emph{form problem on \Sym{n}}. It also has an inhomogeneous
manifestation: for $m-1$ given values, invert $m-1$ invariant
rational functions of degree zero.

By iterating a reliable \Sym{n} equivariant we can break the obstructing
symmetry. In effect, the dynamics provides a mechanism for solving the
form problem and, hence, the $n$th degree equation.

%----------
\subsection{Parameters}

The \G{8!} rational form problem is to solve
\begin{align*}
K_1 &= \frac{\Phi_3(u)^2}{\Phi_2(u)^3} \quad
&K_2 &=\frac{\Phi_4(u)}{\Phi_2(u)^2} \quad
&K_3 &=\frac{\Phi_5(u)}{\Phi_2(u)\,\Phi_3(u)}\\
K_4 &= \frac{\Phi_6(u)}{\Phi_2(u)^3} \quad
&K_5 &= \frac{\Phi_7(u)}{\Phi_2(u)\,\Phi_5(u)} \quad
&K_6 &= \frac{\Phi_8(u)}{\Phi_2(u)^4}.
\end{align*}
As functions, the $K_i$ define the \G{8!} quotient map
\begin{align*}
&[K_1,K_2,K_3,K_4,K_5,K_6,1] =\\
&\qquad [\Phi_2\,\Phi_3^3\,\Phi_5,
 \Phi_2^2\,\Phi_3\,\Phi_4\,\Phi_5,
 \Phi_2^3\,\Phi_5^2,
 \Phi_2\,\Phi_3\,\Phi_5\,\Phi_6,
 \Phi_2^3\,\Phi_3\,\Phi_7,
 \Phi_3\,\Phi_5\,\Phi_8,
 \Phi_2^4\,\Phi_3\,\Phi_5]
\end{align*}
on $\CP{6} \setminus \{\Phi_2=\Phi_3=\Phi_5=0\}$. The generic
fiber over points in \CP{6} is a \G{8!} orbit given by
\begin{align*}
&\{\Phi_3^2 = K_1\,\Phi_2^3\} \cap \{\Phi_4 = K_2\,\Phi_2^2\} \cap
\{\Phi_5 = K_3\,\Phi_2\,\Phi_3\}\\
&\cap \{\Phi_6 = K_4\,\Phi_2^3\} \cap \{\Phi_7 =
K_5\,\Phi_2\,\Phi_5\} \cap \{\Phi_8 = K_6\,\Phi_2^4\}.
\end{align*}
Exceptional locations are
$$[0,0,1,0,0,0,0],\ [0,0,0,0,1,0,0],\ [0,0,0,0,0,1,0]$$
where the respective fibers are the hypersurfaces $\{\Phi_3=0\}$,
$\{\Phi_5=0\}$, and $\{\Phi_2=0\}$. The parameters $K_i$ forge a link
between octic equations and \G{8!} actions. The connection consists in
$K$-parametrizations of each regime.

%----------
\subsection{A family of \Sym{8} quintics}

Let \G{v} be a version of \G{8!} that acts on a $v$-coordinatized
$\CP{6}_v$. This will be a parameter space---the coordinate $v$
merely stands in for $u$. The linear polynomials
\begin{equation} \label{eq:X_k}
X_k(x) = - 7\,x_k + \sum_{i \neq k} x_i
\end{equation}
form an orbit of size eight.  Let
$$L_k(u)=X_k(\obar{H^T}u).$$
Then the rational functions
$$\sigma_k(v) = \frac{\Phi_2(v)\,L_{k}(v)}{\Phi_3(v)} $$
also give an $8$-orbit. Taking the $\sigma_k$ as roots of a polynomial
$$
R_v(s) = \prod_{k=1}^8 \bigl(s - \sigma_k(v)\bigr) = \sum_{k=0}^8
C_k(v)\,s^{8-k}
$$
creates a family of octics whose members generically have \Sym{8}
symmetry. Since \G{v} permutes the $\sigma_k(v)$, each coefficient
$C_k(v)$ is \G{v}-invariant and hence, expressible in the basic forms
$\Phi_k(v)$. Ultimately, we can express each $C_k$ in terms of the $K_i$.
By direct calculation,
\begin{align*}
C_0 &= 1\\
C_1 &= 0\\
C_2 &= -\frac{32}{K_1}\\
C_3 &= \frac{512}{3\,K_1^2}\\
C_4 &=  \frac{512\,\bigl(1 - 2\,K_2\bigr)}{8\,K_1^2}\\
C_5 &= \frac{16384\,( -5 + 6\,K_3)}{15\,K_1^2}\\
C_6 &= \frac{-16384\,\bigl(- 3 + 8\,K_1 + 18\,K_2 - 24\,K_4 \bigr)}{9\,K_1^3}\\
C_7 &= \frac{262144\,(35 - 70\,K_2 - 84\,K_3 - 120\,K_3\,K_5) }{105\,K_1^3}\\
C_8 &= \frac{131072\,\bigl(15 - 160\,K_1 + 180\,K_2 + 180\,K_2^2 +
384\,K_1\,K_3 + 480\,K_4 - 720\,K_6 \bigr)}{45\,K_1^4}.
\end{align*}
Members of the family of octic \emph{resolvents}
$$R_K(s) = \sum_{n=0}^8 C_n\,s^{8-n}$$
parametrized by $K=(K_1,\dots,K_6)$ are particularly well-suited
for an iterative solution that employs $g_4$. For chosen values of
the $K_i$, a solution to the resulting form problem yields a root
of $R_K$. We can use \G{8!} symmetry to find such a solution
without explicitly inverting the $K_i$ equations.  Our attention
will turn to this issue after we connect the general octic to the
special family $R_K$.

%----------
\subsection{Reduction of the general octic to the \G{8!} resolvent}

By means of a linear \emph{Tschirnhaus} transformation
$$x \longrightarrow y - \frac{a_1}{8}$$
the general octic
$$p(x) = x^8 + a_1\,x^7 + a_2\,x^6 + a_3\,x^5 + a_4\,x^4 + a_5\,x^3
+ a_6\,x^2 + a_7\,x + a_8$$ becomes the \emph{standard}
$7$-parameter \emph{resolvent}
$$
q(y) = y^8 + b_2\,y^6 + b_3\,y^5 + b_4\,y^4 + b_5\,y^3 + b_6\,y^2
+ b_7\,y + b_8
$$
where \small \begin{align*}
b_2 =&\ \frac{-7\,a_1^2 + 16\,a_2}{16}\\
b_3 =&\ \frac{7\,a_1^3 - 24\,a_1\,a_2 + 32\,a_3}{32}\\
b_4 =&\ \frac{-105\,a_1^4 + 480\,a_1^2\,a_2 - 1280\,a_1\,a_3 +
     2048\,a_4}{2048}\\
b_5 =&\ \frac{7\,a_1^5 - 40\,a_1^3\,a_2 + 160\,a_1^2\,a_3 -
     512\,a_1\,a_4 + 1024\,a_5}{1024}\\
b_6 =&\ \frac{-35\,a_1^6 + 240\,a_1^4\,a_2 -
     1280\,a_1^3\,a_3 + 6144\,a_1^2\,a_4 -
     24576\,a_1\,a_5 + 65536\,a_6}{65536}\\
b_7 =&\ \frac{3\,a_1^7 - 24\,a_1^5\,a_2 + 160\,a_1^4\,a_3 -
     1024\,a_1^3\,a_4 + 6144\,a_1^2\,a_5 -
     32768\,a_1\,a_6 + 131072\,a_7}{131072}\\
b_8 =&\ \frac{-7}{16777216}\,\bigl( a_1^8 + 64\,a_1^6\,a_2
     - 512\,a_1^5\,a_3 + 4096\,a_1^4\,a_4
     - 32768\,a_1^3\,a_5 + 262144\,a_1^2\,a_6\\
     &- 2097152\,a_1\,a_7 + 16777216\,a_8 \bigr).
\end{align*} \normalsize

Application of another linear \emph{Tschirnhaus} transformation
$$s \longrightarrow \frac{y}{\lambda}$$
converts the $6$-parameter family $R_K(s)$ into a \G{8!} resolvent
\begin{align*}
\Sigma_{K,\lambda}(y) &=
\lambda^8\,R_K\Bigl(\frac{y}{\lambda}\Bigr) =\sum_{n=0}^8
\lambda^n\,C_n\,y^{8-n}
\end{align*}
in the seven parameters $K_1,\dots,K_6$, and the \emph{auxiliary}
$\lambda$.

The functions
$$b_n = \lambda^n\,C_n(K)$$
relate the coefficients of $q$ and $\Sigma_{K,\lambda}$. The $b_n$
invert to
\begin{align*}
K_1 &= \frac{-9\,b_3^2}{8\,b_2^3} \\
K_2 &= \frac{b_2^2 - 2\,b_4}{2\,b_2^2} \\
K_3 &= \frac{5\,\bigl( b_2\,b_3 - b_5 \bigr) }{6\,b_2\,b_3} \\
K_4 &= \frac{2\,b_2^3 - 3\,b_3^2 - 6\,b_2\,b_4 + 6\,b_6}{8\,b_2^3}\\
K_5 &= \frac{7\, \bigl( b_2^2\,b_3 - b_3\,b_4 - b_2\,b_5 + b_7
\bigr)}
{10\,b_2\,\bigl( b_2\,b_3 - b_5 \bigr) } \\
K_6 &= \frac{b_2^4 - 4\,b_2\,b_3^2 - 4\,b_2^2\,b_4 + 2\,b_4^2 +
4\,b_3\,b_5 + 4\,b_2\,b_6 - 4\,b_8}{8\,b_2^4} \\
\lambda &= \frac{-3\,b_3}{16\,b_2}.
\end{align*}
Thus, almost any octic descends to a member of $R_K$. The
reduction fails when
$$
-7\,a_1^2 + 16\,a_2 = 16\,b_2 = 0 \qquad \text{or} \qquad 7\,a_1^3
- 24\,a_1\,a_2 + 32\,a_3 = 32\,b_3 = 0
$$
or
\begin{align*}
&-105\,a_1^5 + 600\,a_1^3\,a_2 - 768\,a_1\,a_2^2 - 608\,a_1^2\,a_3
+
1024\,a_2\,a_3 + 512\,a_1\,a_4 - 1024\,a_5 \\
&\qquad = 1024\,(b_2\,b_3 - b_5) = 0.
\end{align*}
A solution to the special resolvent $R_K$ then ascends to a
solution to the general quintic.

%----------
\subsection{A family of \Sym{8} actions}

With the basic \G{v} forms and maps, we can define the
\emph{parametrized change of coordinates}
$$
u = \tau_v w = \sum_{k=1}^7 \bigl(\Phi_{9-k}(v)\,\phi_k(v) \bigr)
w_k.
$$
For a choice of parameter $v$,
$$\tau_v: \CP{6}_w \longrightarrow \CP{6}_u$$
is linear in $w$ and provides a parametrized family of \G{8!} groups
$$\G{w}^v = \tau_v^{-1} \G{u} \tau_v.$$
A matrix form results from taking the $\phi_k(v)$ as column
vectors:
$$
\begin{pmatrix} u_1\\ \vdots\\u_7 \end{pmatrix} = \begin{pmatrix}
&&\\
\Phi_8(v)\,\phi_1(v)&\dots&\Phi_2(v)\,\phi_7(v)\\
&&
\end{pmatrix}
\begin{pmatrix} w_1\\ \vdots\\w_7 \end{pmatrix}.
$$
The setup here is as follows.
\begin{itemize}

\item \G{u} is a version of \G{8!} that acts on a \emph{reference space}
$\CP{6}_u$.

\item \G{v} is a version of \G{8!} that acts on a \emph{parameter space}
$\CP{6}_v$.

\item \G{u} and \G{v} have identical expressions in their respective
coordinates.

\item $\G{w}^v$ are versions of \G{8!} that act on
\emph{v-parametrized spaces} $\CP{6}_w$.

\item The iteration that solves octics $R_K$ takes place in $\CP{6}_w$.

\end{itemize}

Each $\G{w}^v$ has its system of invariants and equivariants. From
this point of view we can see, in the resolvents $R_v$ and
$\G{w}^v$ equivariants, a connection between octics and dynamical
systems. Furthermore, each $\G{w}^v$ invariant and equivariant is
expressible in the $K_i$.  This circumstance connects a resolvent
$R_K$ with $\G{w}^v$-symmetric maps.

By construction, $\tau_v w$ possesses an equivariance property:
$$
\tau_{A v} w = A\,\tau_v w \quad \text{for}\ A \in \G{v}, \G{u}.
$$
The determinant of $\tau_v$ will enter into upcoming calculations
and so, demands some attention. Since
$$\det{\tau_{A v}} = \det{A}\,\det{\tau_v},$$
$\det{\tau_v}$ is invariant under the \Alt{8} subgroup \G{8!/2} of
\G{v} but only relatively invariant under the full \Sym{8} group
\G{8!}. (The ``even transformations" have determinant $1$ while
the odd elements have determinant $-1$.) Furthermore,
\begin{align*}
\det{\tau_v} &= \Bigl(\prod_{k=2}^8  \Phi_k(v) \Bigr)\,
\det{\begin{pmatrix}\phi_1(v)\  \cdots\  \phi_7(v)\end{pmatrix}}\\
&= \Bigl(\prod_{k=2}^8 \Phi_k(v) \Bigr)\,\Psi_{28}(v)
\end{align*}
where $\Psi_{28}$ is, according to a basic result in reflection group
theory, a scalar multiple of the product of the $28$ linear forms
associated with the $28$ hyperplanes that are fixed by the reflections
that generate
\G{v}. Furthermore, $\Psi_{28}$ is invariant under the group
\G{8!/2} (isomorphic to the alternating group
\Alt{8}) but is relatively invariant under \G{8!}. Consequently,
the degree-$126$ square of $\det{\tau_v}$ is \G{8!}-invariant with
$K$-expression
$$(\det{\tau_v})^2 = \Phi_2^{63}(v)\,t_K.$$
The explicit form of $t_K$ appears at \cite{CrassWeb}.

%----------
\subsection{A family of \Sym{8} invariants}

The equivariance in $v$ of $\tau_v w$ implies that $\Phi_2(\tau_v
w)$ is \G{v}-invariant. Thus, each $w$ coefficient of
$\Phi_2(\tau_v w)$ inherits the same invariance. Since
$$
\deg_v \Phi_2(\tau_v w) = \deg_u \Phi_2(u) \cdot \deg_v \tau_v w =
2 \cdot 9 = 18,
$$
the rational function
$$\frac{\Phi_2(u)}{\Phi_2(v)^9} = \frac{\Phi_2(\tau_v w)}{\Phi_2(v)^9}$$
is degree zero in $v$ and thereby, expressible in $K$. For each
$w$ monomial, we can solve a system of linear equations whose
dimension is that of the degree-18 \G{v} invariants.  The result
is an explicit expression in $K$ for each $w$-coefficient of
$\Phi_2(\tau_v w)$. Let
\begin{equation} \label{eq:Phi2}
\Phi_2(v)^9\,\Phi_{2_K}(w) = \Phi_2(u)
\end{equation}
define the basic degree-2 $\G{w}^v$ invariant $\Phi_{2_K}(w)$.
Similar considerations apply in degree three where
\begin{equation} \label{eq:Phi3}
\Phi_2(v)^{12}\,\Phi_3(v)\,\Phi_{3_K}(w) = \Phi_3(u).
\end{equation}
The results appear at \cite{CrassWeb}.

By Proposition~\ref{prop:PhiInG}, the degree-$4$ and degree-$5$
invariants derive from those in degrees two and three. The chain
rule determines a transformation formula for the bordered hessian.
Let $|\cdot |$ represent the determinant and $A^T$ the transpose
of $A$.

\begin{prop}

For $y=A x$,
$$
BH_x\bigl(F(y),G(y),J(y)\bigr) =
\begin{pmatrix}A^T&0\\0&1 \end{pmatrix} BH_y\bigl(F(y),G(y),J(y)\bigr)
\begin{pmatrix}A&0\\0&1 \end{pmatrix}
$$
where the subscript indicates the variable of differentiation.
Thus,
$$
\bigl|BH_x\bigl(F(y),G(y),J(y)\bigr) \bigr| =
 |A|^2 \bigl|BH_y\bigl(F(y),G(y),J(y)\bigr) \bigr|.
$$

\end{prop}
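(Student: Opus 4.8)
The plan is to reduce the identity to the multivariate chain rule for a linear substitution, followed by a one-line block-matrix computation. First I would record how gradients transform: writing $y=Ax$ with $A=(A_{k\ell})$, so that $y_k=\sum_\ell A_{k\ell}x_\ell$, the chain rule gives $\partial_{x_i}F(y)=\sum_k (\partial_{y_k}F)(y)\,A_{ki}$, i.e. in vector form $\nabla_x\!\bigl(F(Ax)\bigr)=A^T\,(\nabla_y F)(Ax)$. Applying this to $G$ and to $J$ produces the border (last column and last row) of $BH_x$, expressed through the $y$-gradients of $G$ and $J$.

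Differentiating once more, $\partial_{x_i}\partial_{x_j}F(y)=\sum_{k,\ell}(\partial_{y_k}\partial_{y_\ell}F)(y)\,A_{ki}\,A_{\ell j}$, so the Hessian obeys $H_x(F)=A^T\,H_y(F)\,A$, with all $y$-partials evaluated at $y=Ax$. Assembling these facts into the definition of the bordered hessian, the leading $n\times n$ block of $BH_x\bigl(F(y),G(y),J(y)\bigr)$ is $A^T H_y(F)A$, its last column (first $n$ entries) is $A^T\nabla_y G$, its last row (first $n$ entries) is $(\nabla_y J)^T A$, and its lower-right entry is $0$. It then remains to multiply out
$$
\begin{pmatrix}A^T&0\\0&1\end{pmatrix}
\begin{pmatrix}H_y(F)&\nabla_y G\\(\nabla_y J)^T&0\end{pmatrix}
\begin{pmatrix}A&0\\0&1\end{pmatrix}
=\begin{pmatrix}A^T H_y(F)\,A&A^T\nabla_y G\\(\nabla_y J)^T A&0\end{pmatrix},
$$
and to observe that the right-hand side is exactly the matrix just described; this proves the first assertion. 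The determinant statement then follows by taking determinants and using multiplicativity together with $\det\begin{pmatrix}A^T&0\\0&1\end{pmatrix}=\det A^T=\det A$ and $\det\begin{pmatrix}A&0\\0&1\end{pmatrix}=\det A$, giving $|BH_x|=|A|\cdot|BH_y|\cdot|A|=|A|^2\,|BH_y|$.

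I do not expect a genuine obstacle here; the computation is routine. The only points demanding care are the placement of the transpose---the chain rule contracts the differentiation index of $A$, which is why $A^T$ rather than $A$ multiplies on the left---and the convention that, after the substitution, every $y$-partial on the right-hand side is understood to be evaluated at $y=Ax$, so that both sides are honestly functions of $x$. As a sanity check, specializing to the case where $F,G,J$ are $A$-invariant ($F(Ax)=F(x)$, etc.) recovers Proposition~\ref{prop:hess}: the left-hand determinant is then $B(x)$, the right-hand one is $(\det A)^2\,B(Ax)$, whence $B(Ax)=(\det A)^{-2}B(x)$.
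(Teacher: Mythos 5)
Your argument is correct and is exactly the routine chain-rule computation the paper has in mind (the paper states this proposition without proof, introducing it with ``The chain rule determines a transformation formula for the bordered hessian''). The gradient and Hessian transformation rules, the block-matrix assembly, and the determinant step are all right, and your closing sanity check correctly recovers Proposition~\ref{prop:hess}.
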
 \noindent
Applied to the parametrized change of variable $w = \tau_v^{-1}
u$, this formula along with \eqref{eq:Phi2} and \eqref{eq:Phi3}
yields
\begin{align*}
G_4(u) &= \bigl|BH_u \bigl(\Phi_2(u),\Phi_3(u),\Phi_3(u)\bigr) \bigr|\\
&= \Bigl|BH_u \bigl( \Phi_2(v)^9\,\Phi_{2_K}(w),
\Phi_2(v)^{12}\,\Phi_3(v)\,\Phi_{3_K}(w),
\Phi_2(v)^{12}\,\Phi_3(v)\,\Phi_{3_K}(w)
\bigr) \Bigr|\\
&= \frac{\Phi_2(v)^{6\cdot 9 + 2\cdot
12}\,\Phi_3(v)^2}{|\tau_v|^2}\,
\Bigl|BH_w \bigl(\Phi_{2_K}(w),\Phi_{3_K}(w),\Phi_{3_K}(w) \bigr) \Bigr|\\
&= \frac{\Phi_2(v)^{78}\,\Phi_3(v)^2}{\Phi_2(v)^{63}\,t_K}\,
\Bigl|BH_w \bigl(\Phi_{2_K}(w),\Phi_{3_K}(w),\Phi_{3_K}(w) \bigr) \Bigr| \\
&= \frac{\Phi_2(v)^{15}\,\Phi_3(v)^2}{t_K}\,
\Bigl|BH_w \bigl(\Phi_{2_K}(w),\Phi_{3_K}(w),\Phi_{3_K}(w) \bigr) \Bigr| \\
&= \frac{\Phi_2(v)^{18}}{t_K}\,\frac{\Phi_3(v)^2}{\Phi_2(v)^3}
\Bigl|BH_w \bigl(\Phi_{2_K}(w),\Phi_{3_K}(w),\Phi_{3_K}(w) \bigr) \Bigr| \\
&= \Phi_2(v)^{18}\,\frac{K_1}{t_K}
\Bigl|BH_w \bigl(\Phi_{2_K}(w),\Phi_{3_K}(w),\Phi_{3_K}(w) \bigr) \Bigr| \\
&= \Phi_2(v)^{18}\,G_{4_K}(w).
\end{align*}
Using Proposition~\ref{prop:PhiInG} we obtain
\begin{align*}
\Phi_4(u) &= \frac{1}{576}\,\bigl(
72\,\Phi_2(v)^{18}\,\Phi_{2_K}(w)^2 + \Phi_2(v)^{18}\,G_{4_K}(w)
\bigr)\\
&= \Phi_2(v)^{18}\,\Phi_{4_K}(w)
\end{align*}
so that
\begin{align*}
G_5(u) &= \bigl|BH_u \bigl(\Phi_2(u),\Phi_3(u),\Phi_4(u)\bigr) \bigr|\\
&= \Bigl|BH_u \bigl( \Phi_2(v)^9\,\Phi_{2_K}(w),
\Phi_2(v)^{12}\,\Phi_3(v)\,\Phi_{3_K}(w),
\Phi_2(v)^{18}\,\Phi_{4_K}(w)
\bigr) \Bigr|\\
&= \Phi_2(v)^{21}\,\Phi_3(v)\,G_{5_K}(w)
\end{align*}
and
\begin{align*}
\Phi_5(u) &= \frac{1}{768}\,\bigl(
96\,\Phi_2(v)^9\,\Phi_{2_K}(w)\,\Phi_2(v)^{12}\,\Phi_3(v)\,\Phi_{3_K}(w)
+ \Phi_2(v)^{21}\,\Phi_3(v)\,G_{5_K}(w)
\bigr)\\
&= \Phi_2(v)^{21}\,\Phi_3(v)\,\Phi_{5_K}(w).
\end{align*}
Employed here are the obvious definitions
\begin{align*}
G_{4_K}(w) &= \frac{K_1}{t_K} \Bigl|BH_w
\bigl(\Phi_{2_K}(w),\Phi_{3_K}(w),\Phi_{3_K}(w) \bigr)
\Bigr| \\
G_{5_K}(w) &= \frac{1}{t_K} \Bigl|BH_w
\bigl(\Phi_{2_K}(w),\Phi_{3_K}(w),\Phi_{4_K}(w) \bigr) \Bigr|
\end{align*}
as well as natural definitions for $\Phi_{4_K}(w)$ and
$\Phi_{5_K}(w)$.

%----------
\subsection{A family of \Sym{8} equivariants} \label{sec:S8equivFam}

Emerging from each $\G{w}^v$ action is a version
$\tau_v^{-1}\,g_4(\tau_v w)$ of $g_4(u)$. Being \G{v}-invariant,
these maps also admit parametrization by $K$. In this way, each
octic $R_K$ enters into association with a dynamical system $g_K$
on $\CP{6}_w$.

The reversed identity $R$ and gradient $\nabla^r=R\,\nabla$
appeared in the context of a change from eight $x$ coordinates to
seven $u$ coordinates. (See Definition~\ref{df:R}.) In the present
setting, a \emph{reversed transpose} arises.

\begin{defn}

The \emph{repose} $A^r$ of an $n \times n$ matrix $A$ is its
reflection through the \emph{reversed diagonal}---the entries
whose subscripts sum to $n+1$. Alternatively,
$$A^r = R\, A^T\,R.$$

\end{defn}

\begin{prop}

For a coordinate change $x=Ay$ and a function $F(y) =
\tilde{F}(x)$, the reversed gradient map transforms by
$$\nabla_y^r F(y) = A^r\,\nabla_x^r \tilde{F}(x).$$

\end{prop}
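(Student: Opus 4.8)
The plan is to reduce this to the ordinary gradient transformation law together with the algebraic identity $A^r = R\,A^T\,R$ that defines the repose. First I would recall the standard fact (an orthogonal-change-of-variable computation, essentially the chain rule) that under $x = Ay$ with $F(y) = \tilde F(x)$, the ordinary gradients are related by
\begin{equation*}
\nabla_y F(y) = A^T\,\nabla_x \tilde F(x).
\end{equation*}
This is just $\frac{\partial F}{\partial y_i} = \sum_j \frac{\partial \tilde F}{\partial x_j}\frac{\partial x_j}{\partial y_i} = \sum_j A_{ji}\,\frac{\partial \tilde F}{\partial x_j}$, i.e. the $(j,i)$ entry of $A$ contracted against $\nabla_x\tilde F$, which is $A^T$ acting on $\nabla_x\tilde F$.

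Next I would multiply both sides on the left by the reversed identity $R$ of Definition~\ref{df:R}. On the left this produces $R\,\nabla_y F(y) = \nabla_y^r F(y)$ by definition of the reversed gradient. On the right it produces $R\,A^T\,\nabla_x\tilde F(x)$. The trick is to insert $R^2 = I_7$: since $R$ is the reversed identity, $R^2$ is the identity, so $R\,A^T\,\nabla_x\tilde F(x) = (R\,A^T\,R)(R\,\nabla_x\tilde F(x)) = A^r\,\nabla_x^r\tilde F(x)$, using $A^r = R\,A^T\,R$ from the definition of the repose and $R\,\nabla_x\tilde F(x) = \nabla_x^r\tilde F(x)$. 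Stringing the equalities together gives exactly $\nabla_y^r F(y) = A^r\,\nabla_x^r\tilde F(x)$.

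There is essentially no hard part here; the only thing to be careful about is the orthogonality/symmetry hypotheses that make the first displayed identity valid in the form stated (the paper is working throughout on $\HH_x$ with the hermitian $H$ satisfying $H\obar{H^T}=I_7$, so the relevant changes of variable behave well), and the trivial but easy-to-overlook observation that $R$ is an involution so that $R^2$ can be inserted freely. I would note in passing that the formula is the natural analogue of Proposition~\ref{prop:Hmaps}: there the passage from $x$ to $u$ coordinates converted $\nabla$ into $\nabla^r$ via the matrix $H$, and here the same reversal, now applied to a general linear change $x=Ay$, converts the transpose $A^T$ governing ordinary gradients into the repose $A^r$ governing reversed gradients.
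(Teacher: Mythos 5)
Your proof is correct: the chain rule gives $\nabla_y F(y) = A^T\,\nabla_x \tilde F(x)$ for any linear change $x = Ay$, and left-multiplying by $R$ while inserting $R^2 = I_7$ converts $A^T$ into the repose $A^r = R\,A^T\,R$ acting on $\nabla_x^r \tilde F(x)$ — which is exactly the computation the paper defers to \cite{quintic}, Proposition~4.2, rather than reproducing. One minor remark: the gradient transformation law you start from is the bare chain rule and holds for arbitrary $A$, so the orthogonality caveat you flag at the end is not actually needed for this proposition.
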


\begin{proof}

See \cite{quintic}, Proposition~4.2, but note that the coordinate change
there should be $w=Au$.

\end{proof}
Using \eqref{eq:equiv}, the degree-$1$ \G{8!} map is
\begin{align*}
\phi_1(u) &= -\frac{8}{2}\nabla_u^r \Phi_2(v)^9\,\Phi_{2_K}(w) \\
&= -4\,\Phi_{2}(v)^9\,
 (\tau_v^{-1})^r\,\nabla_w^r \Phi_{2_K}(w) \\
&= -4\,\Phi_{2}(v)^9\,\tau_v\,\tau_v^{-1}\,(\tau_v^{-1})^r\,
   \nabla_w^r \Phi_{2_K}(w) \\
&= -4\,\tau_v\,\Phi_{2}(v)^9\,(\tau_v^r\,\tau_v)^{-1}\,
   \nabla_w^r \Phi_{2_K}(w).
\end{align*}
Thus,
$$
\tau_v^{-1}\,\phi_1(\tau_v w) = -4\,\Phi_{2}(v)^9\,
(\tau_v^r\,\tau_v)^{-1}\,\nabla_w^r \Phi_{3_K}(w).
$$
Using the description on the left-hand side, a straightforward
calculation reveals this map to be invariant in $v$ so that the
matrix $\tau_v^r\,\tau_v$ has entries that are degree-$18$ \G{v}
invariants. Hence, the matrix product has the $K$-expression
\begin{equation} \label{eq:TK}
\tau_v^r\,\tau_v = \Phi_2(v)^9\,T_K \quad \text{or} \quad
(\tau_v^r\,\tau_v)^{-1} = \frac{T_K^{-1}}{\Phi_2(v)^9}.
\end{equation}
(See \cite{CrassWeb} for the explicit form.) Also, note that
$$
\det{T_K} = \frac{\det{(\tau_v^r\,\tau_v)}}{\Phi_2(v)^{9\cdot 7}}
= \frac{(\det{\tau_v})^2}{\Phi_2(v)^{63}} = t_K.
$$
Making use of \eqref{eq:TK} to express the transformation of basic
equivariants yields
\begin{align*}
\phi_1(u) &= -4\,\Phi_{2}(v)^9\,\tau_v\,
  \frac{T_K^{-1}}{\Phi_2(v)^9}\,
   \nabla_w^r \Phi_{2_K}(w) \\
&= -4\,\tau_v\,T_K^{-1}\,\nabla_w^r \Phi_{2_K}(w).
\end{align*}

As for the other relevant maps, similar calculations give
\begin{align*}
\phi_2(u) &= -\frac{8}{3}\,\Phi_{2}(v)^{12}\,\Phi_3(v)\,\tau_v\,
  \frac{T_K^{-1}}{\Phi_2(v)^9}\,
   \nabla_w^r \Phi_{3_K}(w) \\
&= -\frac{8}{3}\,\Phi_{2}(v)^{3}\,\Phi_3(v)\,\tau_v\,T_K^{-1}\,
   \nabla_w^r \Phi_{3_K}(w)\\
\phi_4(u) &= -\frac{8}{4}\,\Phi_{2}(v)^{21}\,\Phi_3(v)\,\tau_v\,
  \frac{T_K^{-1}}{\Phi_2(v)^9}\,
   \nabla_w^r \Phi_{5_K}(w) \\
&= -2\,\Phi_{2}(v)^{12}\,\Phi_3(v)\,\tau_v\,T_K^{-1}\,
 \nabla_w^r \Phi_{5_K}(w).
\end{align*}

Finally, we can identify a $K$-parametrized $4$-map $g_K(w)$ that
is conjugate to $g_4(u)$. The map's expression in basic terms
appears after substitution into the formula found in
Section~\ref{sec:4map}:
\begin{align*}
g_4(u) =&\ 84\,\phi_4(u) - 9\,\Phi_2(u)\,\phi_2(u) - 2\,\Phi_3(u)\,\phi_1(u)\\
=&\ -8\,\Phi_2^{12}(v)\,\Phi_3(v)\,\tau_v\,T_K^{-1}\,\Bigl(
21\,\nabla_w^r\Phi_{5_K}(w) \\
&\ - 3\,\Phi_{2_K}(w)\,\nabla_w^r\Phi_{3_K}(w) -
\Phi_{3_K}(w)\,\nabla_w^r\Phi_{2_K}(w) \Bigr).
\end{align*}
Thus, we have a $K$-parametrized family of $4$-maps on $\CP{6}_w$:
$$
g_K (w) = T_K^{-1}\,\Bigl( 21\,\nabla_w^r\Phi_{5_K}(w) -
3\,\Phi_{2_K}(w)\,\nabla_w^r\Phi_{3_K}(w) -
\Phi_{3_K}(w)\,\nabla_w^r\Phi_{2_K}(w) \Bigr)
$$
whose relation to the reference $4$-map is
$$g_4(u)= -8\,\Phi_2^{12}(v)\,\Phi_3(v)\,\tau_v\,g_K(w).$$

%----------
\subsection{Root selection}

Being conjugate to $g_4(u)$ each $g_K(w)$ shares the former's
conjectured reliable dynamics. Accordingly, the attractor for each
choice of $K$ is the $8$-point orbit in the corresponding
$\CP{6}_w$ so that for almost every $w_0 \in \CP{6}_w$,
$$
g_K^n(w_0) \longrightarrow \tau_v^{-1} \p{8}{\ell} \qquad
\text{for some $8$-point}\ \p{8}{\ell} \in \CP{6}_u.
$$
To solve the resolvent $R_K$, the output of the iteration must
link with the roots of $R_K$. From here, we see that solving $R_K$
amounts to inverting $\tau_v$---the form problem in another guise.
This is effectively what the dynamics of $g_K$ accomplishes with
the assistance of a \G{8!} tool that I now describe.

The quadratic \Sym{7}-invariants
$$X^2_k(x) = - 7\,x_k^2 + \sum_{i \neq k} x_i^2 $$
form a \G{8!} orbit of size eight. Recall that
$$L_k(u) = X_k(\obar{H^T} u)$$
and let
$$Q_k(u) = X^2_k(\obar{H^T} u)$$
be the $\HH_u$ expression for $X_k^2$. Furthermore, each of the
eight forms
$$
G_k(u) = Q_k(u) - \frac{3}{4}\,L_k(u)^2 \quad
  k = 1, \dots, 8
$$
vanish at the $8$-points \p{8}{\ell} with $\ell \neq k$ but not at
\p{8}{k}.

Now, consider the rational function
$$J_v(w) =
\alpha\,\sum_{k=1}^8
 \frac{G_k(\tau_v w)}{\Phi_2(\tau_v w)}\,\frac{\Phi_2(v)\,L_k(v)}{\Phi_3(v)}
= \alpha\,\sum_{k=1}^8 \frac{G_k(\tau_v w)}{\Phi_2(\tau_v w)}\,\sigma_k(v)
$$
where $\alpha$ is a constant to be determined. Since the
$v$-degree of the numerator and denominator is $21=2\cdot 9 + 3$
while the $w$-degree is $2$, the function is rationally degree
zero in both variables. At an $8$-point $\tau_v^{-1} \p{8}{\ell}$
in $\CP{6}_w$ seven of the eight terms in $J_v$ vanish; this
leaves
$$\alpha\,\frac{G_\ell(\p{8}{\ell})}{\Phi_2(\p{8}{\ell})}\,\sigma_\ell(v).$$
Choosing
$$
\alpha = \frac{\Phi_2(\p{8}{k})}{G_k(\p{8}{k})} = \frac{1}{48}\qquad
(k=1,\dots,8)
$$
``selects" the root $\sigma_\ell(v)$ of $R_K(s)$. Since the iterative
``output" of $g_K(w)$ is a single $8$-point in $\CP{6}_w$, the dynamics
produces one root.

To obtain a usable form of the root-selector $J_v(w)$, let
$$\Gamma_v(w) = \sum_{k=1}^8 G_k(\tau_v w)\,L_k(v).$$
Since \G{v} permutes its terms, $\Gamma_v$ is invariant under the
action and hence, expressible in $K$:
$$\Gamma_v(w) = \Phi_2(v)^8\,\Phi_3(v)\,\Gamma_K(w).$$
(The explicit form of $\Gamma_K$ appears at \cite{CrassWeb}.)
Application of \eqref{eq:Phi2} yields
\begin{align*}
J_v(w)
&= \frac{\Phi_2(v)\,\Gamma_v(w)}{48\,\Phi_3(v)\,\Phi_2(\tau_v w)}\\
J_K(w) &= \frac{\Gamma_K(w)}{48\,\Phi_{2_K}(w)}.
\end{align*}

%----------
\subsection{The procedure summarized}

At \cite{CrassWeb}, there are \emph{Mathematica} data files and a
notebook that implement the iterative solution to the octic.

\begin{enumerate}

\item Select a general $8$-parameter octic $p(x)$.

\item Tschirnhaus transform $p(x)$ into a member $R_K(s)$ of the
$6$-parameter family of \G{8!} octics---this determines values for
$K_1, \dots, K_6$ as well as the auxiliary parameter $\lambda$.

\item For the selected $K$ values, compute the matrix $T_K$, the invariants
$\Phi_{2_K}(w)$ and $\Phi_{3_K}(w)$, the $4$-map $g_K(w)$, the
form $\Gamma_K(w)$, and the root-selector $J_K(w)$.

\item From an arbitrary initial point $w_0$ iterate $g_K$ until
convergence:
$$g_K^n(w_0) \longrightarrow w_\infty.$$
Conjecturally, the output $w_\infty$ is an $8$-point in
$\CP{6}_w$.

\item Compute a root $\sigma = J_K(w_\infty)$ of $R_K$.

\item Transform $\sigma$ into a root of $p(x)$.

\end{enumerate}

\section{Beyond the octic} \label{sec:gen}

\subsection{The general case}

In the following discussion the \Sym{n} actions under
consideration derive from permutation of coordinates on the
\Sym{n}-invariant hyperplanes
$$\HH^{n-1}=\Biggl\{\sum_{k=1}^{n} x_k =0\Biggr\}.$$
These irreducible representations of \Sym{n} project to actions
\G{n!} on $\PP{}\HH^{n-1}\simeq \CP{n-2}$.

For equations of degree $n \neq 8$, does the analogue of the
octic-solving algorithm exist? Evidently, the reduction of the
$n$th degree polynomial to an $(n-2)$-parameter family of \G{n!}
resolvents is general.

\begin{query}

Is there an \Sym{n}-equivariant $4$-map on $\PP{}\HH^{n-1}$ that,
on a $\binom{n}{2}$-line \LL{1}{} (where all but two coordinates
are equal):

\begin{itemize}

\item superattracts in the off-line directions

\item restricts to a reliable map whose attractor consists of the pair of
$n$-points on \LL{1}{}?

\end{itemize}

\end{query}

\begin{query}
If so, is the map expressible as
$$
z \longrightarrow z^3\,\frac{a\,z + b}{b\,z + c} \qquad a,b
\in\R{} \quad \text{and}\ |b|<|a|
$$
when restricted to \LL{1}{}?
\end{query}

The affine space $\{x_n \neq 0\}$ parametrized by
$$\Bigl[x_1, \dots, x_{n-2},-\sum_{k=1}^{n-2} x_k -1,1\Bigr]$$
is tangent to $\PP{} \HH^{n-1}$ at the affine part of \LL{1}{}
given by
$$[\zeta, \dots, \zeta,(2-n)\,\zeta-1,1].$$
We can identify these spaces respectively with
$$
\{(x_1, \dots, x_{n-2})\}\simeq \CC{n-2}\quad \text{and}\quad
\{(\zeta, \dots, \zeta)\}\simeq\CC{}.
$$

\begin{defn}

Abusing notation, let \LL{1}{} be the affine part of an
$\binom{n}{2}$-line. Use $\T_{\LL{1}{}}$ to denote the tangent
space to $\PP{}\HH^{n-1}$ along \LL{1}{}.  Also,
$$
\LL{1}{\perp} =
  \Biggl\{\sum_{k=1}^{n-2} x_k = 0\Biggr\} \simeq \CC{n-3}
$$
is the (euclidean) orthogonal complement in $\T_{\LL{1}{}}$ to
\LL{1}{}.

\end{defn}

The subgroup $S_{\LL{1}{}}$ of \G{n!} that stabilizes $\T_{\LL{1}{}}$ is
isomorphic to \Sym{n-2} and acts by permutations on \CC{n-2}.  The action
of $S_{\LL{1}{}}$ fixes $\T_{\LL{1}{}}$ and \LL{1}{\perp} set-wise and
fixes \LL{1}{} in a point-wise manner.

\begin{rem}

In the treatment of \LL{1}{} below, we need not worry about the
point at infinity
$$[1,\dots,1,2-n,0],$$
since this point is in the same \G{n!}-orbit as the affine point
$$[1,\dots,1,0,2-n]$$
which we have identified with
$$\frac{1}{2-n}\,(1,\dots,1).$$

\end{rem}

For $n \geq 5$, the family of $4$-maps on $\PP{}\HH^{n-1}$ is, in
homogeneous parameters,
$$
g_\alpha = \alpha_1\,f_4 + \alpha_2\,F_2\,f_2 + \alpha_3\,F_3\,f_1
$$
where definitions of $F_k$ and $f_k$ are obvious extensions from the
\G{8!} case.  (For $n<5$, the family of $4$-maps is not $3$-dimensional.)
Do three parameters suffice to obtain a map $g_{\alpha}$ with jacobian
matrix $g_{\alpha}'(z)$ at $z$ such that
$$g_{\alpha}'(z)\,\LL{1}{\perp} = 0 \qquad \text{for all}\ z \in \LL{1}{}?$$

First of all, symmetry demands that, for each $g_\alpha$ and all $z \in
\LL{1}{}$, there is only one ``off-line" eigenvalue of $g_\alpha'(z)$.

\begin{lemma} \label{lm:f'}

For an action \G{} on \CP{n} and a \G{}-equivariant $f$ that is
holomorphic at $a$, the jacobian $f'(a)$ is equivariant on the
tangent space $\T_a \simeq \CC{n}$ under the stabilizer $\Ss_a$ of
$a$.

\end{lemma}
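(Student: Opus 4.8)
The plan is to unwind the definition of equivariance at the linear level via the chain rule. Let $T \in \Ss_a$, so that $Ta = a$ (as points of $\CP{n}$; working on a fixed affine chart around $a$ we may take $T$ to be the honest linear representative fixing the affine point, absorbing the projective scalar). The equivariance $f \circ T = T \circ f$ holds identically near $a$. First I would differentiate both sides at $a$. The left side gives, by the chain rule, $f'(Ta)\,T'(a) = f'(a)\,T$, using $Ta=a$ and the fact that $T$ is linear so $T'(a)=T$. The right side gives $T'(f(a))\,f'(a) = T\,f'(a)$, using that $f(a)=a$ (equivariants preserve fixed points of a group element, as noted in the ``fixed point property'' subsection) and again linearity of $T$. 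Equating, $f'(a)\,T = T\,f'(a)$, which is precisely the statement that $f'(a)$ commutes with the action of $\Ss_a$ on $\T_a \simeq \CC{n}$, i.e.\ $f'(a)$ is $\Ss_a$-equivariant on the tangent space.

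The one point requiring care is the passage from projective to linear/affine data, since $f$ is a rational map on $\CP{n}$ and the group elements act projectively. I would handle this by fixing a lift: choose affine coordinates centered so that $a$ corresponds to a point where $f$ is holomorphic (given by hypothesis), pick the linear representative of $T$ normalized to fix that affine point, and pick the homogeneous lift of $f$ of the appropriate degree; then the scalar ambiguities in $f \circ T = T\,f$ and in $f(a)=a$ are constants that differentiate away or cancel, since we are comparing derivatives at a single fixed point. This is the same bookkeeping already used implicitly when the paper speaks of ``the $28$-hyperplane maps to itself'' and of lifts fixing the $8$-points, so I would simply remark that the identity $f'(a)T = Tf'(a)$ is independent of these normalization choices.

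I expect no serious obstacle here — the lemma is essentially a formal consequence of differentiating an equivariance relation at a fixed point, and the only thing to be stated carefully is that $\Ss_a$ does stabilize $\T_a$ linearly (it fixes $a$, hence acts on the tangent space) and that $f(a)=a$ so both chain-rule evaluations land at $a$. The mild subtlety, rather than an obstacle, is conceptual: one should note that $f'(a)$ is only well-defined up to scalar as a map $\T_a \to \T_{f(a)} = \T_a$, so ``equivariant'' is meant in the projective sense — but since the off-line eigenvalue discussion that follows only cares about $f'(a)$ up to scalar and about its eigenspace decomposition under $\Ss_a$, this is exactly the form of the conclusion that is needed. I would close by observing that, consequently, the eigenspaces of $f'(a)$ are $\Ss_a$-invariant subspaces of $\T_a$, which sets up the representation-theoretic argument (decomposing $\T_a$, and in particular $\LL{1}{\perp}$, into $\Ss_a$-irreducibles) used in the sequel.
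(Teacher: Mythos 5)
Your proof is correct and arrives at the same commutation $f'(a)\,T = T\,f'(a)$, but by a slightly different route than the paper. You differentiate the equivariance relation $f\circ T = T\circ f$ at $a$ directly via the chain rule, using $Ta=a$ and the fact that a linear map is its own derivative. The paper instead lifts to $\CC{n+1}$, chooses a linear representative of $T$ with $T\hat a=\hat a$, and appeals to Euler's identity $r\,g(x)=g'(x)\,x$ for a homogeneous degree-$r$ map $g$: applying it to $f\circ T$ and $T\circ f$ yields $\bigl(f'(Tx)\,T\bigr)x = r\,f(Tx) = T\,r\,f(x) = \bigl(T\,f'(x)\bigr)x$ for all $x$, from which the matrix identity $f'(Tx)T=T f'(x)$ is read off and then specialized at $x=a$. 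Your chain-rule version is the more direct argument — the passage from the paper's vector identity (an equality of contractions against $x$) to the matrix identity is itself most cleanly justified by differentiating the original relation, which is exactly what you do — while the paper's version makes the lift-and-normalize bookkeeping explicit, which is precisely the issue you flag in your second paragraph. One small correction is in order, though: you do not need, and should not claim, $f(a)=a$. The paper's ``fixed point property'' observation only says that if $T$ fixes $a$ then $T$ fixes $f(a)$, i.e.\ $\Ss_a\subseteq\Ss_{f(a)}$; it does not say $f(a)=a$, and in the intended application (Proposition~\ref{prop:psi'}) the points $z\in\LL{1}{}$ are generally not fixed by $g_\alpha$. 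Happily, your chain-rule computation never actually uses $f(a)=a$: since $T$ is linear, $T'(f(a))=T$ no matter where $f(a)$ lies, so the right-hand side is $T\,f'(a)$ regardless, and the conclusion should be read as equivariance of the linear map $f'(a)\colon\T_a\to\T_{f(a)}$ with respect to the $\Ss_a$-actions on both tangent spaces.
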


\begin{proof}

After treating several technical matters, the proof amounts to a
simple calculation.

\begin{enumerate}

\item Take $a$ to be $[0,\dots,0,1]$ and $\T_a$ to be
$\{x_{n+1} \neq 0\}$ and lift them to
$$
\hat{a}=(0,\dots,0,1)
\qquad \text{and} \qquad
\widehat{\T{a}}=\{x_{n+1}=1\}.
$$

\item Strictly speaking, $\Ss_a$ is a group of projective
transformations. Here, we choose linear representatives $T$ of $\Ss_a$
that, as maps on \CC{n+1}, satisfy $T a=a$. This group of linear
transformations acts on $\widehat{\T_a}$.

\item For a homogeneous polynomial $G(x)$ of degree $r$,
$$r\,G(x) = \bigl(\nabla_x G(x)\bigr)^T x$$
is a familiar identity.  Generalized to a rational map $g(x)$, the
result is
$$r\,g(x) = g'(x)\,x.$$

\end{enumerate}

Given $T \in \G{}$, let $\hat{T}$ be a lift to \CC{n+1}.  Then for all $x
\in \CC{n+1}$,
\begin{align*}
\bigl(f'(Tx)\,T\bigr) x &= \bigl(f'(Tx)\bigr) Tx\\
&= r\,f(Tx)\\
&= T(r\,f(x))\\
&= T(f'(x)\,x)\\
&= \bigl(T f'(x)\bigr) x.
\end{align*}
Thus,
$$f'(Tx)\,T = T f'(x).$$
In particular, when $x=a$ and $T \in \Ss_a$,
$$f'(a)\,T = f'(T a)\,T = T\,f'(a).$$

\end{proof}

\begin{lemma} \label{lm:L1Perp}

A linear $S_{\LL{1}{}}$-equivariant $T$ preserves \LL{1}{\perp}.

\end{lemma}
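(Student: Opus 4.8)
The plan is to exploit the fact, established in the preceding paragraph, that $S_{\LL{1}{}} \cong \Sym{n-2}$ acts on $\T_{\LL{1}{}} \simeq \CC{n-2}$ by permuting coordinates, while fixing the line $\LL{1}{} = \{(\zeta,\dots,\zeta)\}$ pointwise and preserving the orthogonal complement $\LL{1}{\perp} = \{\sum_{k=1}^{n-2} x_k = 0\}$ set-wise. The point is that the decomposition
$$\T_{\LL{1}{}} = \LL{1}{} \oplus \LL{1}{\perp}$$
is exactly the decomposition of the permutation representation of $\Sym{n-2}$ into its trivial summand (the diagonal line) and its standard $(n-3)$-dimensional irreducible summand. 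Since these two summands are non-isomorphic irreducible $S_{\LL{1}{}}$-modules, Schur's lemma forces any $S_{\LL{1}{}}$-equivariant linear map $T$ to respect the decomposition.

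In detail, first I would note that $\LL{1}{}$ and $\LL{1}{\perp}$ are $S_{\LL{1}{}}$-submodules of $\T_{\LL{1}{}}$: this is immediate since permuting the coordinates $(x_1,\dots,x_{n-2})$ fixes $(\zeta,\dots,\zeta)$ and preserves the relation $\sum x_k = 0$. Next, I would invoke the standard representation theory of the symmetric group: over $\CC{}$, the permutation module $\CC{n-2}$ of $\Sym{n-2}$ splits as a direct sum of the trivial module (spanned by $(1,\dots,1)$, i.e. $\LL{1}{}$) and the standard module (the hyperplane $\sum x_k = 0$, i.e. $\LL{1}{\perp}$), and for $n-2 \ge 2$ the standard module is irreducible and is not isomorphic to the trivial module. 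Then, given an $S_{\LL{1}{}}$-equivariant linear map $T$, consider the composite $\LL{1}{\perp} \hookrightarrow \T_{\LL{1}{}} \xrightarrow{T} \T_{\LL{1}{}} \twoheadrightarrow \LL{1}{}$, where the last arrow is the $S_{\LL{1}{}}$-equivariant projection onto the diagonal line. This composite is an $S_{\LL{1}{}}$-homomorphism from the standard module to the trivial module, hence zero by Schur. Therefore $T(\LL{1}{\perp}) \subseteq \LL{1}{\perp}$, which is the claim. (For the degenerate small cases $n=3,4$ the statement is either vacuous or trivial, so I would either assume $n \ge 5$ in line with the surrounding discussion or dispatch them in a sentence.)

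The main subtlety — really the only one — is bookkeeping about which field we are over and making sure $T$ is genuinely $\CC{}$-linear and $S_{\LL{1}{}}$-equivariant as a map of the complexified tangent space, so that complex representation theory of $\Sym{n-2}$ applies; the irreducibility of the standard module and its inequivalence with the trivial module are classical and require no argument. Everything else is a two-line application of Schur's lemma, so I do not expect any computational obstacle. An alternative, entirely elementary route that avoids citing representation theory is to write $T = (t_{ij})$, impose $t_{\pi(i)\pi(j)} = t_{ij}$ for all $\pi \in \Sym{n-2}$, conclude that $T = a\,I + b\,J$ where $J$ is the all-ones matrix, and then observe directly that $(aI + bJ)$ maps $\{\sum x_k = 0\}$ to itself because $J$ kills that subspace; I would probably present this concrete version as it needs no outside input and makes the constant structure of $T$ transparent for later use.
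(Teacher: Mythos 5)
Your proof is correct, but it takes a different route than the paper. The paper's proof does not invoke Schur's lemma or full $\Sym{n-2}$ representation theory. Instead, it picks out a single cyclic permutation $A$ of order $n-2$ in $S_{\LL{1}{}}$, diagonalizes it via the discrete-Fourier vectors $w_k = (1,\w{}^k,\dots,\w{}^{(n-3)k})$ with distinct eigenvalues $\w{}^{(n-3)k}$, and observes that $\LL{1}{\perp}$ is spanned by $w_1,\dots,w_{n-3}$ while $\LL{1}{}=\LL{1}{n-2}$. Commuting $T$ with $A$ then forces $Tw_k$ to stay in the $\w{}^{(n-3)k}$-eigenline, so each $\LL{1}{k}$ is $T$-invariant and $\LL{1}{\perp}$ is preserved. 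What the paper buys with this approach is a strictly stronger statement: only equivariance under the cyclic subgroup is used, not under all of $\Sym{n-2}$; and the argument stays entirely at the level of a single matrix diagonalization, with no appeal to irreducibility. What your Schur's-lemma argument (or the concrete $T = a\,I + b\,J$ variant, which I agree is the most self-contained version) buys is conceptual transparency --- it identifies the decomposition $\T_{\LL{1}{}} = \LL{1}{}\oplus\LL{1}{\perp}$ as trivial $\oplus$ standard and makes the conclusion immediate from non-isomorphism of the summands. Both are valid; neither has a gap. One small caution with the $T = a\,I + b\,J$ route: it classifies \emph{all} $\Sym{n-2}$-equivariant linear maps, which is more than the lemma requires and slightly overkill for $n=3,4$ where the hyperplane is trivial or one-dimensional, but as you note those cases are vacuous.
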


\begin{proof}

The transformation
$$A:\T_{\LL{1}{}} \longrightarrow \T_{\LL{1}{}}$$
that cyclically permutes coordinates according to $(12\dots
(n-3)(n-2))$ has $n-2$ eigenspaces, namely, the lines \LL{1}{k}
given by
$$
\LL{1}{k} = \Biggl\{ t\,w_k\ |\ t \in \CC{},\
  w_k = \begin{pmatrix}
  1\\ \w{}^k\\ \w{}^{2\,k}\\ \vdots\\ \w{}^{(n-3)\,k}
  \end{pmatrix},\
  \w{} = e^{2\,\pi\,i/(n-2)}
\Biggr\} \quad k=1, \dots, n-2.
$$
Note that \LL{1}{}=\LL{1}{n-2} and $\{w_k\ |\ k=1, \dots, n-3\}$
is a basis for \LL{1}{\perp}.  Moreover, each line \LL{1}{k} has
the eigenvalue $\w{}^{(n-3)\,k}$.

Since $A \in S_{\LL{1}{}}$ and
$$A (T w_k)= T (A w_k) = \w{}^{(n-3)\,k}\, T w_k,$$
$T w_k$ is also an $\w{}^{(n-3)\,k}$-eigenvector of $A$.  Thus,
each \LL{1}{k} is an eigenspace of $T$ so that \LL{1}{\perp} is
$T$-invariant.

\end{proof}

\begin{prop} \label{prop:psi'}

For all $z \in \LL{1}{}$, the eigenvectors of the jacobian
$g_\alpha'(z)$ span $\LL{1}{\perp}$. Moreover, all associated
eigenvalues are equal, making $\LL{1}{\perp}$ an eigenspace of
$g_\alpha'(z)$.

\end{prop}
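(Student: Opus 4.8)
The plan is to deduce everything from Schur's lemma, with Lemma~\ref{lm:f'} supplying the equivariance of the jacobian and Lemma~\ref{lm:L1Perp} supplying the invariant subspace on which Schur bites. First I would note that $g_\alpha=\alpha_1\,f_4+\alpha_2\,F_2\,f_2+\alpha_3\,F_3\,f_1$ is \G{n!}-equivariant, being a combination of equivariants with invariant coefficients. Then I would fix $z\in\LL{1}{}$ at which $g_\alpha$ is holomorphic and observe that, since $S_{\LL{1}{}}\cong\Sym{n-2}$ fixes \LL{1}{} point-wise, it lies inside the stabilizer $\Ss_z$ of $z$ in \G{n!}. Applying Lemma~\ref{lm:f'} with $\G{}=\G{n!}$ and $f=g_\alpha$ then gives that $g_\alpha'(z)$ is an $\Ss_z$-equivariant linear map on $\T_z\simeq\CC{n-2}$, hence in particular $S_{\LL{1}{}}$-equivariant; in the affine chart $\{x_n\neq 0\}$ used above, the action of $S_{\LL{1}{}}$ on $\T_z$ is the honest permutation representation of \Sym{n-2} on \CC{n-2}.

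Next I would invoke Lemma~\ref{lm:L1Perp}: the $S_{\LL{1}{}}$-equivariant map $g_\alpha'(z)$ preserves \LL{1}{\perp}, so \LL{1}{\perp} is a $g_\alpha'(z)$-invariant subspace of dimension $n-3$. The representation-theoretic input is that the permutation representation of \Sym{n-2} on \CC{n-2} splits as the trivial line spanned by the all-equal vector — which is the \LL{1}{}-direction — plus the standard $(n-3)$-dimensional module, and this standard module is exactly $\LL{1}{\perp}=\{\sum_{k=1}^{n-2}x_k=0\}$. Since the standard representation of a symmetric group is irreducible over \CC{}, Schur's lemma forces $g_\alpha'(z)$ restricted to \LL{1}{\perp} to be a scalar multiple $\mu(\alpha,z)$ of the identity. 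I would then read off the conclusion: every nonzero vector of \LL{1}{\perp} is an eigenvector of $g_\alpha'(z)$ with the one eigenvalue $\mu(\alpha,z)$, so the eigenvectors span \LL{1}{\perp} and \LL{1}{\perp} is an eigenspace. (The complementary \LL{1}{}-direction is also preserved — this is where the single ``on-line'' eigenvalue, which governs the induced map of \LL{1}{} into itself, lives — but that is not part of the claim.)

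I do not expect a real obstacle here; Lemmas~\ref{lm:f'} and~\ref{lm:L1Perp} are set up precisely so that the proposition collapses to one application of Schur. The points that will need care are more in the nature of bookkeeping: checking that in the chosen affine chart the stabilizer $S_{\LL{1}{}}$ acts on $\T_z$ through the coordinate-permutation representation, so that \LL{1}{\perp} is exactly its standard summand and not some other $(n-3)$-plane; and making sure $g_\alpha$ is holomorphic at every $z\in\LL{1}{}$ rather than merely at a generic point, so that $g_\alpha'(z)$ is defined there. The latter is the only analytic subtlety, and once it is dispatched the statement holds for all $z\in\LL{1}{}$ as written.
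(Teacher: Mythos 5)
Your proof is correct and takes essentially the same route as the paper's: both deduce from Lemmas~\ref{lm:f'} and~\ref{lm:L1Perp} that $g_\alpha'(z)$ is an $S_{\LL{1}{}}$-equivariant endomorphism preserving \LL{1}{\perp}, and then use the irreducibility of \LL{1}{\perp} as the standard $\Sym{n-2}$-module to force $g_\alpha'(z)$ to act there by a scalar. The paper's version is just slightly more hands-on --- it picks one eigenvector $v \in \LL{1}{\perp}$ of $g_\alpha'(z)$ and notes that the orbit $\{Av : A \in S_{\LL{1}{}}\}$ consists of eigenvectors with the same eigenvalue and spans \LL{1}{\perp} --- which is exactly the spanning/irreducibility step that your appeal to Schur's lemma packages in one stroke.
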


\begin{proof}

Recall that the stabilizer $\Sym{\LL{1}{}}$ is isomorphic to
\Sym{n-2}. (The $n-2$ things that $\Sym{\LL{1}{}}$ permutes are
the vectors $w_k$ defined above.) By Lemmas~\ref{lm:f'} and
\ref{lm:L1Perp}, let $v \in \LL{1}{\perp}$ be an eigenvector of
$g_\alpha'(z)$ with eigenvalue $\lambda$. For $A \in
S_{\LL{1}{}}$, Lemma~\ref{lm:f'} gives
$$g_\alpha'(z) Av = A\,g_\alpha'(z)\,v = \lambda\,Av.$$
Hence, $Av$ is also a $\lambda$-eigenvector. Clearly, $\{Av\,|\, A
\in S_{\LL{1}{}}\}$ spans \LL{1}{\perp}.

\end{proof}

The question now is whether there is always some parameter-choice
for which the eigenvalue of $\LL{1}{\perp}$ vanishes for all $z$.

\begin{prop} \label{prop:g4Crit}

For $n \geq 5$, there is a $4$-map $g$ whose critical set includes
the $\binom{n}{2}$-lines. Moreover, at each point on an
$\binom{n}{2}$-line, $g$ is critical in every direction away from
the line.

\end{prop}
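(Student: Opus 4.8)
The plan is to reduce the statement to the vanishing of a single scalar function on the line $\LL{1}{}$ and then solve for the parameters. By Proposition~\ref{prop:psi'}, for each $z \in \LL{1}{}$ the subspace $\LL{1}{\perp}$ is an eigenspace of $g_\alpha'(z)$ with a single eigenvalue $\mu_\alpha(z)$; the map $g_\alpha$ is critical in every off-line direction at $z$ precisely when $\mu_\alpha(z) = 0$. So the entire content of the proposition is the existence of $\alpha = (\alpha_1,\alpha_2,\alpha_3)$ (not all zero) with $\mu_\alpha(z) \equiv 0$ along $\LL{1}{}$. First I would use the $S_{\LL{1}{}}$-symmetry to compute $\mu_\alpha(z)$ concretely: pick the eigenvector $w_1 \in \LL{1}{\perp}$ from Lemma~\ref{lm:L1Perp}, restrict each basic equivariant $f_4$, $F_2 f_2$, $F_3 f_1$ to the affine line $[\zeta,\dots,\zeta,(2-n)\zeta-1,1]$, differentiate in the $w_1$-direction, and read off the $w_1$-component. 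Because the $f_k$ are projections of power maps (Proposition~\ref{prop:genMaps}) and $F_2,F_3$ are power sums, each restricted off-line eigenvalue is an explicit polynomial in $\zeta$.

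The key structural point is a degree count. Along $\LL{1}{}$ the only all-coordinates-equal relation is $x_1=\dots=x_{n-2}$, so — exactly as in Section~\ref{sec:4map} for $n=8$, where $S_2(\widehat{v_k})$ and $S_3(\widehat{v_k})$ already vanish on the line — several of the symmetric-function pieces in $g_\alpha'(z)$ drop out, and what survives is a low-degree polynomial in $\zeta$. I expect $\mu_\alpha(z)$ to be, after clearing a common factor, a polynomial of degree at most two in $\zeta$ whose three coefficients are linear in $(\alpha_1,\alpha_2,\alpha_3)$. Requiring those three coefficients to vanish is a homogeneous linear system in three unknowns, which always has a nontrivial solution. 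I would then check that the solution is not spurious — i.e. that the resulting $g$ is genuinely a degree-$4$ map and the ``common factor'' I cleared does not itself vanish identically on $\LL{1}{}$ — and note that this recovers \eqref{eq:g4} when $n=8$, with the third parameter fixed as in Section~\ref{sec:4map} to choose the lift fixing the $n$-points.

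The main obstacle is verifying that the homogeneous linear system has rank exactly two, not three: if the three $\zeta$-coefficients of $\mu_\alpha$ were linearly independent as functionals of $\alpha$, only the trivial (hence non-)map would kill them all. This is where the specific coefficients $-7$ in $X^k_i$ and the power-sum structure matter, and it cannot be finessed purely by symmetry — it is the one genuinely computational step, carried out as a \emph{Mathematica} calculation in the spirit of the paper's other results. A secondary, milder point is uniformity in $n$: the coefficients of $\mu_\alpha$ are rational functions of $n$, and I would confirm that the rank-two conclusion holds for every integer $n \geq 5$ (the degeneracy at $n<5$ being consistent with the earlier remark that the $4$-map family is not then $3$-dimensional). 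Granting these, the existence of $g$ follows, and the ``critical in every off-line direction'' clause is immediate from Proposition~\ref{prop:psi'} since $\LL{1}{\perp}$ is a full eigenspace with eigenvalue $0$.
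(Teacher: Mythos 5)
Your overall plan coincides with the paper's: invoke Proposition~\ref{prop:psi'} to reduce the off-line criticality condition to the vanishing of a single scalar eigenvalue $\mu_\alpha$ along $\LL{1}{}$, expand it as a polynomial on the line with coefficients linear in $(\alpha_1,\alpha_2,\alpha_3)$, and solve the resulting homogeneous linear system. The normalization clause (choosing the lift fixing the $n$-points with the spare parameter) is treated the same way.

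The gap is your central claim that establishing rank at most two ``cannot be finessed purely by symmetry'' and is a \emph{Mathematica} check to be repeated for each $n$. This is exactly backwards, and symmetry is what the paper uses to close the argument uniformly. Since $g_\alpha$ has degree $4$, its jacobian has degree $3$, so the off-line eigenvalue is a homogeneous \emph{cubic}
$$A(x,y) \ =\ A_1\,x^3 + A_2\,x^2 y + A_3\,x y^2 + A_4\,y^3,$$
not a quadratic, in the two line parameters, with each $A_i$ linear in $\alpha$. Now the transposition $((n-1)\,n)$ lies in $S_{\LL{1}{}}$, fixes $\LL{1}{}$ setwise, swaps its two $n$-points, and in the symmetric parametrization~\eqref{eq:L1} acts by $x \leftrightarrow y$. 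The characteristic polynomial of $\widehat{g_\alpha}'$ is invariant under this swap, hence so is the factor $A$, forcing $A_1 = A_4$ and $A_2 = A_3$. Thus
$$A \ =\ (x+y)\,\bigl(A_1\,(x^2 - xy + y^2) + A_2\,xy\bigr),$$
so $A \equiv 0$ reduces to the \emph{two} linear conditions $A_1 = 0$, $A_2 = 0$ in three parameters --- a nontrivial solution exists for every $n \geq 5$ by dimension count alone, with no rank verification. (The factor $x+y$ is not arbitrary: $x+y=0$ is the $\binom{n}{2}$-point on $\LL{1}{}$, which is thereby automatically critical.) Your ``clear a common factor'' step, had you identified the factor as $x+y$ and then noticed the residual quadratic is $\zeta \to 1/\zeta$--symmetric, would have landed you on the same two conditions; as written you would instead be computing, case by case in $n$, something the symmetry already gives you for free.
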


\begin{proof}

To facilitate exposition, we work in the linear space $\HH^{n-1}$. For an
arbitrary member of the family $g_\alpha$, select a lift
$\widehat{g_\alpha}$ to $\HH^{n-1}$.  The line \LL{1}{} lifts to the plane
$\widehat{\LL{1}{}}$ parametrized by
\begin{equation} \label{eq:L1}
(x+y,\ \dots,\ ,x+y,(1-n)\,x+y,x+(1-n)\,y).
\end{equation}
Furthermore, the orthogonal complement $\widehat{\LL{1}{}}_\perp$
in $\HH^{n-1}$ is
$$\Biggl\{ \sum_{k=1}^{n-2} x_k = 0, x_{n-1}=x_n=0 \Biggr\}.$$

By symmetry, we can consider a single line. Using the parametrization
above, the pair of $n$-points on \LL{1}{} correspond to the lines $x=0$
and $y=0$. Meanwhile, the line specified by
$$x+y=0$$
determines an element in one of the special orbits of
$\binom{n}{2}$-points.

Proposition~\ref{prop:psi'} implies that the characteristic polynomial for
the jacobian of $\widehat{g_\alpha}$, when restricted to
$\widehat{\LL{1}{}}$, has the form
\begin{equation*}
\chi_{\widehat{g_\alpha}'}|_{\widehat{\LL{1}{}}} =
  \det{(t\,I_n - \widehat{g_\alpha}')}|_{\widehat{\LL{1}{}}} =\
    \bigl(t - A(x,y)\bigr)^{n-3} \bigl(t^2 - B(x,y)\,t + C(x,y)\bigr)
\end{equation*}
Note that the factor
$$t^2 + B\,t + C$$
is the characteristic polynomial of the jacobian of the map
$\widehat{g_\alpha}|_{\widehat{\LL{1}{}}}$.  Hence,
$$B(x,y) = B_1\,x^3 + B_2\,x^2 y + B_3\,x y^2 + B_4\,y^3$$
where the $B_i$ are linear in the parameters $\alpha_j$.  It
follows that
$$A(x,y) = A_1\,x^3 + A_2\,x^2 y + A_3\,x y^2 + A_4\,y^3$$
where the $A_i$ are linear in the $\alpha_j$.

The polynomial $A$ gives the eigenvalue in the off-line directions
in $\PP{}\HH^{n-1}$. The remaining factor corresponds to behavior
along \LL{1}{}. Our interest here is $A$. In particular, we want
to force it to vanish identically in $x$ and $y$.

Since $\chi_{\widehat{g_\alpha}'}$ is invariant under permutation
of coordinates, its restriction to $\widehat{\LL{1}{}}$ is
invariant under the interchange $x\leftrightarrow y$ (which
corresponds to the transposition $((n-1)\,n)$ in \eqref{eq:L1}).
Accordingly, $A_1=A_4$ and $A_2=A_3$ so that
\begin{align}
A&=\ A_1\,(x^3+y^3) + A_2\,xy(x+y) \nonumber \\
 &=\ (x+y)(A_1\,(x^2-xy+y^2) + A_2\,xy).
\end{align}
Thus, the $\binom{n}{2}$-points are automatically critical away from
\LL{1}{}.

We want to solve the linear equations $A_1=0$ and $A_2=0$ in the
three parameters $\alpha_1,\alpha_2,\alpha_3$.  Such a system has
non-trivial solutions that give $A=0$ for all $x,y$.

\end{proof}

\noindent \textbf{Remarks}

\begin{enumerate}

\item The price of $A=0$ is at most two parameters.  With the third
parameter, we can only normalize the map.  In the discussion that
follows, we will discover that the cost is two parameters so that
the resulting map is unique.

\item Forcing $A_1=0$ is tantamount to making the pair of $n$-points on
\LL{1}{}  critical.  This ``two-birds-with-one-stone" effect is
what makes the procedure successful.

\item Ostensibly, this argument is consistent with our obtaining a map that
blows up at the $n$-points.  At an $n$-point such a map would be
critical in the ``radial" direction in which $\HH^{n-1}$ projects
to $\PP{}\HH^{n-1}$.  Such a circumstance would force $C=0$ when
$xy=0$. Can the maps be critical in other directions as well?  The
preceding results and proofs provide for explicit calculation of
the special $4$-map.  As a consequence, we see that the map does
not blow up at the $n$-points.  Furthermore, we derive the form of
the map on an $\binom{n}{2}$-line.

\end{enumerate}

At first, we use the parametrization
$$[x,\dots,x,y,(2-n)\,x -y]$$
for \LL{1}{}.  Restricting the basic invariants to \LL{1}{} gives
\begin{equation}
\tld{F_k} =\ F_k|_{\LL{1}{}} =\
            (n-2)\,x^k + y^k + ((2-n)\,x-y)^k.
\end{equation}

As for basic maps, note that
\begin{equation} \label{eq:fK}
f_k=[(f_k)_1,\dots,(f_k)_n]\ \text{with}\
(f_k)_\ell=F_k-n\,x_\ell^k.
\end{equation}
Thus,
$$
\tld{f_k} =\ f_k|_{\LL{1}{}} =\ \Bigl[\tld{F_k}-n\,x^k,\dots,
  \tld{F_k}-n\,x^k,\tld{F_k}-n\,y^k,\tld{F_k}-n\,((2-n)\,x-y)^k \Bigr]
$$
and we can express the homogeneous map \emph{on} \LL{1}{} as
$$
\phi_k :
 [x,y] \longrightarrow \Bigl[\tld{F_k}-n\,x^k,\tld{F_k}-n\,y^k \Bigr].
$$

Restricting the family of $4$-maps to \LL{1}{} gives
$$
\tld{g_\alpha} = \alpha_1\,\tld{f_4}
+ \alpha_2\,\tld{F_2}\,\tld{f_2} + \alpha_3\,\tld{F_3}\,\tld{f_1}
$$
which, as a map on the line, is
\begin{align*}
\gamma_\alpha[x,y] =&\ \Bigl[
  \alpha_1\,(\tld{F_4}-n\,x^4) + \alpha_2\,\tld{F_2}\,(\tld{F_2}-n\,x^2)
  - n\,\alpha_3\,x\,\tld{F_3},\\
&\ \alpha_1\,(\tld{F_4}-n\,y^4) + \alpha_2\,\tld{F_2}\,(\tld{F_2}-n\,y^2)
  - n\,\alpha_3\,y\,\tld{F_3}
\Bigr].
\end{align*}

We can now determine three linear conditions on the $\alpha_i$ that
correspond to

\begin{enumerate}

\item normalizing the map so that the $n$-points are fixed in the affine
sense---hence, they are not blown up

\item making the map critical in every direction at the $n$-points

\item making the map critical in every off-line direction along the
$\binom{n}{2}$-lines.

\end{enumerate}

Consider the cases in turn.

\begin{enumerate}

\item Since $[x,y]=[1,1]$ corresponds to an $n$-point, specify that
\begin{equation} \label{eq:PnNorm}
1=(\gamma_\alpha[1,1])_1 = \bigl( n^2-2\,n+2)\,\alpha_1 +
(n-1)\,n\,\alpha_2 + (n-2)\,n\,\bigl((n-1)\,n\,\alpha_3
\bigr).
\end{equation}

\item For the dehomogenized map
$$f(z) = \frac{(\gamma_\alpha[z,1])_1}{(\gamma_\alpha[z,1])_2},$$
set
$$f'(1)=0.$$
This requires
\begin{equation} \label{eq:PnCrit}
- 4\,\alpha_1 - 2\,(n-1)\,n\,\alpha_2 + (n-2)\,(n-1)\,n\,\alpha_3 = 0
\end{equation}
which amounts to $A_1=0$ in the proof of Proposition~\ref{prop:g4Crit}.

\item  To arrive at $A_2=0$, evaluate the jacobian
$\widehat{g_\alpha}'(x)$ at a point $P$ on \LL{1}{} other than an
$n$-point or a $\binom{n}{2}$-point---say, the point corresponding to
$[x,y]=[1,0]$. Now, apply $g_\alpha'(P)$ to the \emph{eigenvector}
$$v=(1,-1,0,\dots,0)$$
in $\widehat{\LL{1}{}}_\perp$.  Since $v$ is an eigenvector, we can
consider just the first component of $g_\alpha'(P) v$.  This component is
$$g_\alpha'(P)_{1,1}-g_\alpha'(P)_{1,2}$$
where
\begin{align*}
g_\alpha'(P)_{1,1} &=\ -4\,(n-1)\,x^3\,\alpha_1 -
  2\,x\,(n\,x^2 +(n-2)\,\tld{F_2}(1,0)) \,\alpha_2 -
  n\,(3\,x^3 + \tld{F_3}(1,0))\,\alpha_3\\
g_\alpha'(P)_{1,2} &=\ 4\,x^3\,\alpha_1 -
  2\,x\,(n\,x^2 - 2\,\tld{F_2}(1,0))\,\alpha_2 - 3\,n\,x^3\,\alpha_3.
\end{align*}
Thus, the condition for $A_2=0$ is
\begin{equation} \label{eq:L1Crit}
n\,\bigl(-4\,\alpha_1 -
  2\,(n-2)\,(n-1)\,\alpha_2 + (n-3)\,(n-2)\,(n-1)\,\alpha_3
\bigr) = (g_\alpha'(P) v)_1 = 0.
\end{equation}

\end{enumerate}

Solving \eqref{eq:PnNorm}, \eqref{eq:PnCrit}, and
\eqref{eq:L1Crit} yields a unique $4$-map $g=g_\alpha$ with
\begin{align} \label{eq:alpha}
\alpha_1&=\ \frac{1}{(n-4)\,n^3} \\
\alpha_2&=\ \frac{-6}{(n-4)(n-1)n^4} \nonumber \\
\alpha_3&=\ \frac{-8}{(n-4)(n-2)(n-1)n^4} \nonumber .
\end{align}

Natural coordinates in which to express $g$ as a map on \LL{1}{}are those
of \eqref{eq:L1} where $x$ and $y$ are symmetrical.  The result is
\begin{align*}
[x,y] \longrightarrow &\
[(\gamma_\alpha[x+y,(1-n)\,x+y])_1 -(\gamma_\alpha[x+y,(1-n)\,x+y])_2,\\
&\ (n-1)\,(\gamma_\alpha[x+y,(1-n)\,x+y])_1
-(\gamma_\alpha[x+y,(1-n)\,x+y])_2].
\end{align*}
Using the inhomogeneous coordinate $z=\tfrac{x}{y}$ the map
becomes
$$z \longrightarrow -\frac{(n-1)\,z-4}{4\,z-(n-1)}$$
in agreement with Section~\ref{sec:4map}.  As for dynamics the
respective pair of $n$-points are $0$ and $\infty$.  Since
$n-1\geq~4$, the restricted map has $0$ and $\infty$ as its only
attractor.  The respective basins are $\{|z|<1\}$ and $\{|z|>1\}$.

\subsection{Another description}

Using $n-1$ homogeneous coordinates, generalize to \CP{n-1} the $v$
coordinates that describe the \G{8!}-equivariant $g_4$. These place the
$n$-points at
$$p_1=[1,0,\dots,0],\ \dots\ ,\ p_{n-1}=[0,\dots,0,1],\ p_n=[1,\dots,1].$$
The coordinate change is given by
$$x=Pv \qquad v=Qx$$
where
$$
P=(p_{ij})=
\begin{cases} 1-n&i=j\\ 1&i \neq j \end{cases} \qquad \text{for}\
1 \leq i \leq n,\ 1 \leq j \leq n-1
$$
and $Q$ is the ``inverse" of $P$:
$$
Q=(q_{ij})=
\begin{cases} -1&i=j\\ 0&i \neq j,\ j<n\\1&j=n \end{cases} \qquad
\text{for}\ 1 \leq i \leq n-1,\ 1 \leq j \leq n.
$$

Using $[v_1,\dots,v_{n-1}]$ for this system, we define $S_k$ to be
the $k$th elementary symmetric function in $n-2$ variables and the
coordinates
$$\widehat{v_k} = (\dots,v_{k-1},v_{k+1},\dots)$$
complementary to $v_k$.  The stabilizer \G{(n-1)!} of $p_n$ is the
\Sym{n-1} group of permutations of the $v_k$. The order-$2$
transformation $Z_n$ that exchanges $p_1$ and $p_n$ while fixing
the remaining $p_k$ generates \G{n!} over \G{(n-1)!}.  Note that
$\binom{n-1}{2}$ of the $\binom{n}{2}$-lines consist of points for
which all but two coordinates vanish, while the remaining $n$
lines have points with all but one coordinate equal.

Let
$$g(x)=[g_1(x), \dots,g_n(x)].$$
To compute the special $4$-map
$$\gamma(v)=Q(g(Pv))=[g_n(Pv)-g_1(Pv), \dots, g_n(Pv)-g_{n-1}(Pv)]$$
in $v$ coordinates, we need to find only the first component of
$\gamma$. Permutation symmetry in $v$ tends to the remaining
components.  Note that
$$Pv=\begin{pmatrix}
(1-n)\,v_1 + S_1(\widehat{v_1})\\
v_1 + S_1(\widehat{v_1}) - n\,v_2\\
\vdots\\
v_1 + S_1(\widehat{v_1}) - n\,v_{n-1}\\
v_1 + S_1(\widehat{v_1})
\end{pmatrix}
=\begin{pmatrix}
u_1-n\,v_1\\
u_1-n\,v_2\\
\vdots\\
u_1-n\,v_{n-1}\\
u_1
\end{pmatrix}
$$
where $u_1=v_1+S_1(\widehat{v_1})$.  Application of \eqref{eq:fK} gives
\begin{align*}
\gamma_1(v)=&\ g_n(Pv)-g_1(Pv)\\
=&\ \Bigl(
  \alpha_1\,\bigl(F_4(Pv) - n\,u_1^4 \bigr)
  + \alpha_2\,\bigl(F_2(Pv)^2 - n\,F_2(Pv)u_1^2 \bigr)
  + n\,\alpha_3\,F_3(Pv)u_1
\Bigr)\\
&- \Bigl(
  \alpha_1\,\bigl(F_4(Pv) - n\,(u_1-n\,v_1)^4 \bigr)
  + \alpha_2\,\bigl(F_2(Pv)^2 - n\,F_2(Pv)(u_1-n\,v_1)^2
  \bigr)\\
  &\ + n\,\alpha_3\,F_3(Pv)(u_1-n\,v_1)
\Bigr)\\
=&\ n\,\Bigl(
  \alpha_1\,\bigl((u_1-n\,v_1)^4-u_1^4 \bigr)
  +\alpha_2\,F_2(Pv)\bigl((u_1-n\,v_1)^2-u_1^2 \bigr)\\
  &\ +\alpha_3\,F_3(Pv)\bigl((u_1-n\,v_1)-u_1 \bigr)
\Bigr)\\
=&\ -n^2\,v_1\,\Bigl(
  \alpha_1\,\bigl(
  (u_1-n\,v_1)^3 + (u_1-n\,v_1)^2 u_1 + (u_1-n\,v_1) u_1^2 + u_1^3
  \bigr)\\
  &\ + \alpha_2\,F_2(Pv) (2\,u_1-n\,v_1)
  + \alpha_3\,F_3(Pv)
  \Bigr).
\end{align*}
Straightforward calculation yields
\begin{align*}
F_2(Pv)=&\
u_1^2 - n\,v_1^2 - n\,S_1(\widehat{v_1})^2 + 2\,n\,S_2(\widehat{v_1})\\
F_3(Pv)=&\ 2\,u_1^3 - 3\,n\,u_1\,v_1^2 + n^2\,v_1^3 -
3\,n\,u_1\,S_1(\widehat{v_1})^2 + n^2\,S_1(\widehat{v_1})^3 \\
&\ + 6\,n\,u_1\,S_2(\widehat{v_1}) -
3\,n^2\,S_1(\widehat{v_1})\,S_2(\widehat{v_1}) +
3\,n^2\,S_3(\widehat{v_1}).
\end{align*}
Using \eqref{eq:alpha}, the substitution for $u_1$, and
permutation in the $v_i$, the map takes the form
\begin{equation} \label{eq:g4maps}
\gamma(v) = [v_1\,T_1(v),\dots,v_{n-1}\,T_{n-1}(v)]
\end{equation}
where
$$
T_k(v) = v_k^3 - a_2\,v_k^2\,S_1(\widehat{v_k}) +
a_3\,v_k\,S_2(\widehat{v_k}) - a_4\,S_3(\widehat{v_k})
$$
and
$$
a_2=\frac{4}{n-1} \qquad a_3=\frac{12}{(n-1)(n-2)}\qquad
a_4=\frac{24}{(n-1)(n-2)(n-4)}.
$$
Evidently, the maps in \eqref{eq:g4maps} are
\G{(n-1)!}-equivariant. They also satisfy
$$Z_n \circ g = g \circ Z_n \qquad \text{for all $n \geq 5$}.$$

\subsection{Revisiting the quintic}

In solving the quintic, \cite{quintic} harnesses the dynamics of a
degree-$6$ map whose behavior is similar to the maps treated in
the present paper. The \G{5!} equivariant $4$-map is also a good
candidate for inclusion in a quintic-solving algorithm.  Having
lower degree, its global dynamics might be more tractable.  For
instance, this map has a kind of \emph{critical finiteness} that
other \G{n!} $4$-maps do not share.  I plan to examine this map in
more dynamical detail in an upcoming paper.

\section{Basin portraits} \label{app:basins}

The plots that follow are productions of the program
\emph{Dynamics 2} running on a Dell Dimension XPS with a Pentium
II processor. Its BA and BAS routines produced the pictures. (See
the manual \cite{NY}.) Briefly, each procedure divides the screen
into a grid of cells and then colors each cell according to which
attracting point its trajectory approaches. If it finds no such
attractor after a specified number of iterations---usually $60$,
the cell is black. The BA algorithm looks for an attractor whereas
BAS requires the user to specify a candidate attracting set of
points. Each portrait exhibits the highest resolution
available---a $720 \times 720$ grid.

All of the images show $g_4$ restricted to either a \CP{1} or
\RP{2} that it preserves.  Some restricted maps have attracting
sites that are not the $8$-points. However, none of the detected
``restricted attractors" other than the $8$-points themselves are
\emph{overall} attractors with $6$-dimensional basins.

\subsection*{\fig{fig:g4L1_168}} When restricted to the line \LL{1}{168},
$g_4$ has only trivial symmetry.  Consequently, there is no
natural choice of coordinates in which to express the restriction
to the line. With an $8$-point at $0$ and a $28$-point \q{28}{ij}
at $\infty$, the map takes the form
$$z \longrightarrow -\frac{z^3\,(3\,z+4)}{4\,(z^2+8\,z+14)}$$
The two basins that appear are associated with the $8$-point and
$28$-point. The latter is an ``equatorial" saddle point on the $28$-line
\LL{1}{28_{ij}}. The basin in \LL{1}{168} is a slice of its
$5$-dimensional stable manifold.  The reflective symmetry that appears is
due to the map's anti-holomorphic equivariance.  (See
Section~\ref{sec:4map}.)

Not pictured is the portrait for \LL{1}{210} on which the map
takes the form
$$z \longrightarrow -z^2\,\frac{3\,z+1}{z+3}.$$
As in the case of a $28$-line, this map has two basins: $\{|z|<1\}$ and
$\{|z|>1\}$.  The attracting points are of the types
$$[1,1,-1,-1,0,0,0,0] \qquad \text{and} \qquad [1,1,1,1,-1,-1,-1,-1].$$
Overall, these behave as saddles.  In fact, at the latter point, $g_4$
blows up onto the associated hyperplane
$$\{x_1+x_2+x_3+x_4=0\} \cap \{x_5+x_6+x_7+x_8=0\}.$$

\subsection*{\fig{fig:g4L1_332}} This shows $g_4$ on \LL{1}{280} in the
form
$$z \longrightarrow \frac{8\,z^3}{5\,z^4+6\,z^2-3}.$$
The basin at $0$ is due to a $28$-point \p{28}{ij}.  This point
belongs to \LL{1}{28_{ij}} and is repelling on that line.  Thus,
the basin pictured here is a $1$-dimensional slice of the
$5$-dimensional stable manifold attached to the point. At $\pm 1$
we see petals due to rationally indifferent points of type
$$[3,3,3,3,3,-5,-5,-5].$$
The indifferent local behavior at these points is evident in this
and several subsequent images.

\subsection*{\fig{fig:g4L1_431}} On \MM{1}{280}, $g_4$ is a polynomial map
expressible by
$$z \longrightarrow -z\,(7\,z^2-5\,z+1).$$
Here, an $8$-point is at $\infty$ (the dark blue basin) and, once
again, a rationally indifferent point of type
$$[3,3,3,3,3,-5,-5,-5]$$
appears at $0$.

\subsection*{\fig{fig:g4R2_56}} We see the \Sym{3}-symmetric restriction to
the \RP{2} intersection of \LL{2}{56} and \RR.  The three basins
result from a triple of $8$-points arranged symmetrically on the
unit circle.  A line of reflective symmetry passes through each of
the $8$-points and $(0,0)$---which corresponds to a point of type
$$[3,3,3,3,3,-5,-5,-5].$$  These lines are \RP{1} intersections of an
\LL{1}{168} with the \RP{2}.  Each \RP{1} contains points in the
basin of an $8$-point and as well as points in the boundary
between the basins of the other two $8$-points.  This interval
lies in the basin of a $28$-point \q{28}{ij} as seen in
\fig{fig:g4L1_168}.

\subsection*{\fig{fig:g4R2_105}} The image displays the basins of
the \Sym{4}-symmetric map on the intersection of \LL{2}{105} and
\RR.  Each \LL{2}{105} is canonically associated with a \CP{3},
\LL{3}{105}, whose points have coordinates that come in four
mutually negative pairs:
$$\begin{array}{lcl}
\{x_i=x_j\}\cap \{x_k=x_\ell\} & \longleftrightarrow &
\{x_i=-x_j\}\cap \{x_k=-x_\ell\}\\
\cap \{x_m=x_n\} \cap \{x_p=x_q\}&
& \cap \{x_m=-x_n\} \cap \{x_p=-x_q\}.
\end{array}$$
Under $g_4$, \LL{3}{105} blows down to \LL{2}{105}. Furthermore,
each point in the orbit of
$$[1,-1,1,-1,1,-1,1,-1]$$
belongs to $24$ of the \LL{3}{105} so that the blowing-down of each \CP{3}
forces all coordinates of the image to be equal. Hence, they must all
vanish.

To describe things explicitly, take the plane
$$
\LL{2}{105} = \{x_1=x_2\}\cap \{x_3=x_4\} \cap \{x_5=x_6\} \cap
\{x_7=x_8\}
$$
parametrized by
$$
[x,y,z] \longrightarrow
[x+y+z,x+y+z,x-y-z,x-y-z,-x-y+z,-x-y+z,-x+y-z,-x+y-z].
$$
There is a $3$-point orbit at
$$[1,0,0],\ [0,1,0],\ [0,0,1]$$
corresponding to the points
$$[1,1,1,1,-1,-1,-1,-1],\ [1,1,-1,-1,-1,-1,1,1],\ [1,1,-1,-1,1,1,-1,-1]$$
and a $4$-point orbit at $[\pm 1,\pm 1,1]$ corresponding to the
$28$-points
\begin{align*}
\q{28}{12}&=\ [-3,-3,1,1,1,1,1,1],\ \q{28}{34}=[1,1,-3,-3,1,1,1,1]\\
\q{28}{56}&=\ [1,1,1,1,-3,-3,1,1],\
\q{28}{78}=[1,1,1,1,1,1,-3,-3].
\end{align*}
Associated with the $3$-points are the lines
$$\{x=0\},\ \{y=0\},\ \{z=0\}$$
corresponding to intersections with three of the \LL{3}{105}:
$$[u,u,-u,-u,v,v,-v,-v],\ [u,u,v,v,-v,-v,-u,-u],\ [u,u,v,v,-u,-u,-v,-v].$$

In these coordinates, the map has the expression
$$
[x,y,z] \longrightarrow
 [yz(4\,x^2+y^2+z^2),xz(x^2+4\,y^2+z^2),xy(x^2+y^2+4\,z^2)].
$$
The attractor here seems to consist of four $28$-points \q{28}{ij} at
$(\pm 1,\pm 1)$.  The points \q{28}{ij} are on \LL{1}{28_{ij}} and, thus,
repel in directions away from the picture-plane.  Here we see off-line
directions relative to \LL{1}{28_{ij}}.  Each
\q{28}{ij} lies on $15$ of the \LL{2}{105} and their ``restricted"
basins in the \CP{2} belong to their stable manifolds as well as
to the \emph{overall} Julia set of $g_4$. Hence, these pieces of
the Julia set have zero measure in \CP{6}. As for real dynamics,
the basins appear to be the four quadrants
$$\{x>0,y>0\},\ \{x<0,y>0\},\ \{x<0,y<0\},\ \{x>0,y<0\}$$
which are forward and, thus, totally invariant.  Accordingly, the
coordinate axes form the basin boundaries.

Finally, each coordinate axis blows down to its companion point while a
$3$-point blows up onto its associated axis.

\subsection*{\fig{fig:g4R2_280}} For the $\Z{2}^2$ map on the
intersection of \LL{2}{280} and \RR{} the two prominent basins
belong to a pair of $8$-points at $(\pm 1,0)$. The other two
attracted regions are associated with a pair of indifferent points
(both eigenvalues are $-1$) of type
$$[3,3,3,3,3,-5,-5,-5].$$

\subsection*{\fig{fig:g4R2_420}} On the \Z{2}-symmetric restriction to the
intersection of \LL{2}{420} and \RR, two $8$-points at $(\pm 1,0)$
and a \q{28}{ij} at $(0,-1)$ account for the basins.

\subsection*{\fig{fig:g4R2_840}} As for the behavior of the \Z{2}-symmetric map on
the intersection of \LL{2}{840} and \RR, there is an $8$-point at
$(0,0)$ and a pair of $28$-points \q{28}{ij}, \q{28}{k\ell} at
$(\pm 1,1)$. Trajectories starting in the thin vertical black
strip near the bottom middle require too many iterations in order
to converge to one of the three attractors for the BA routine to
detect attraction. Experimental results suggest that this region
is filled with rather small components of the three basins.

%%%%%%%%%%%%%%
\begin{figure}[ht]

\resizebox{\basinWidth}{!}{\includegraphics{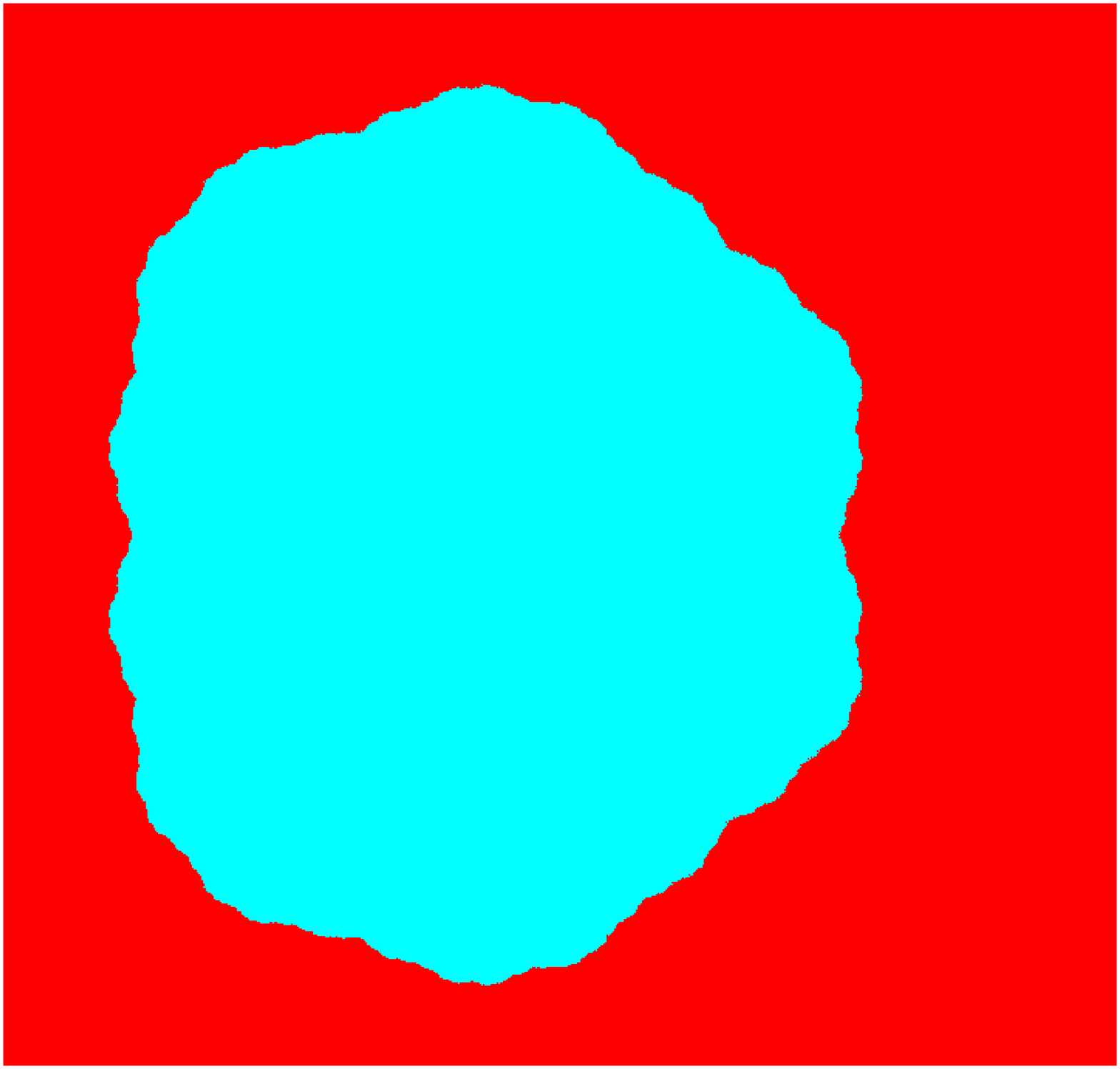}}

\caption{Dynamics of $g_4$ on \LL{1}{168}}

\label{fig:g4L1_168}

\end{figure}
%%%%%%%%%%%%%%
\begin{figure}

\resizebox{\basinWidth}{!}{\includegraphics{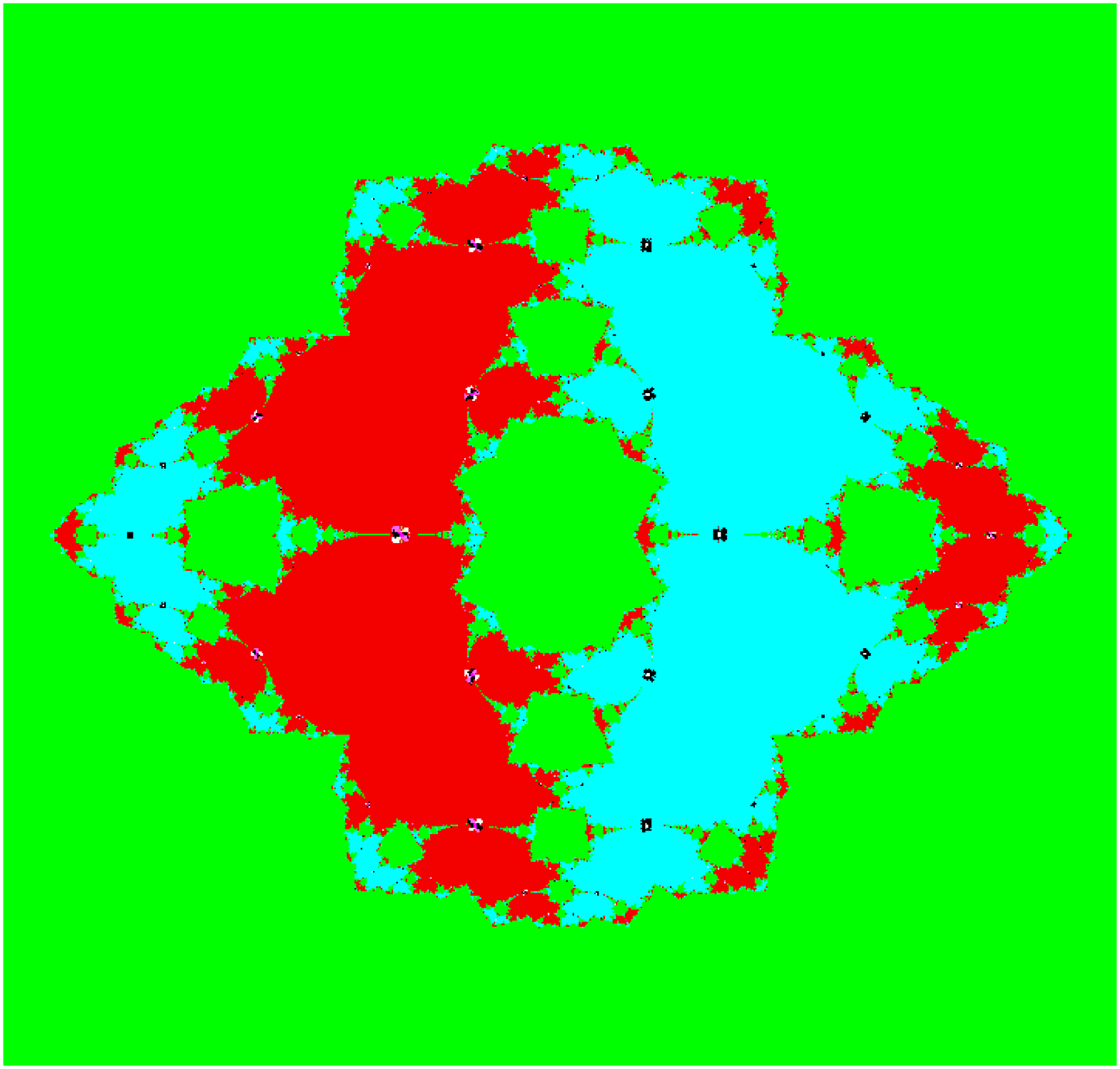}}

\caption{Dynamics of $g_4$ on \LL{1}{280}}

\label{fig:g4L1_332}

\end{figure}
%%%%%%%%%%%%%%
\begin{figure}[ht]

\resizebox{\basinWidth}{!}{\includegraphics{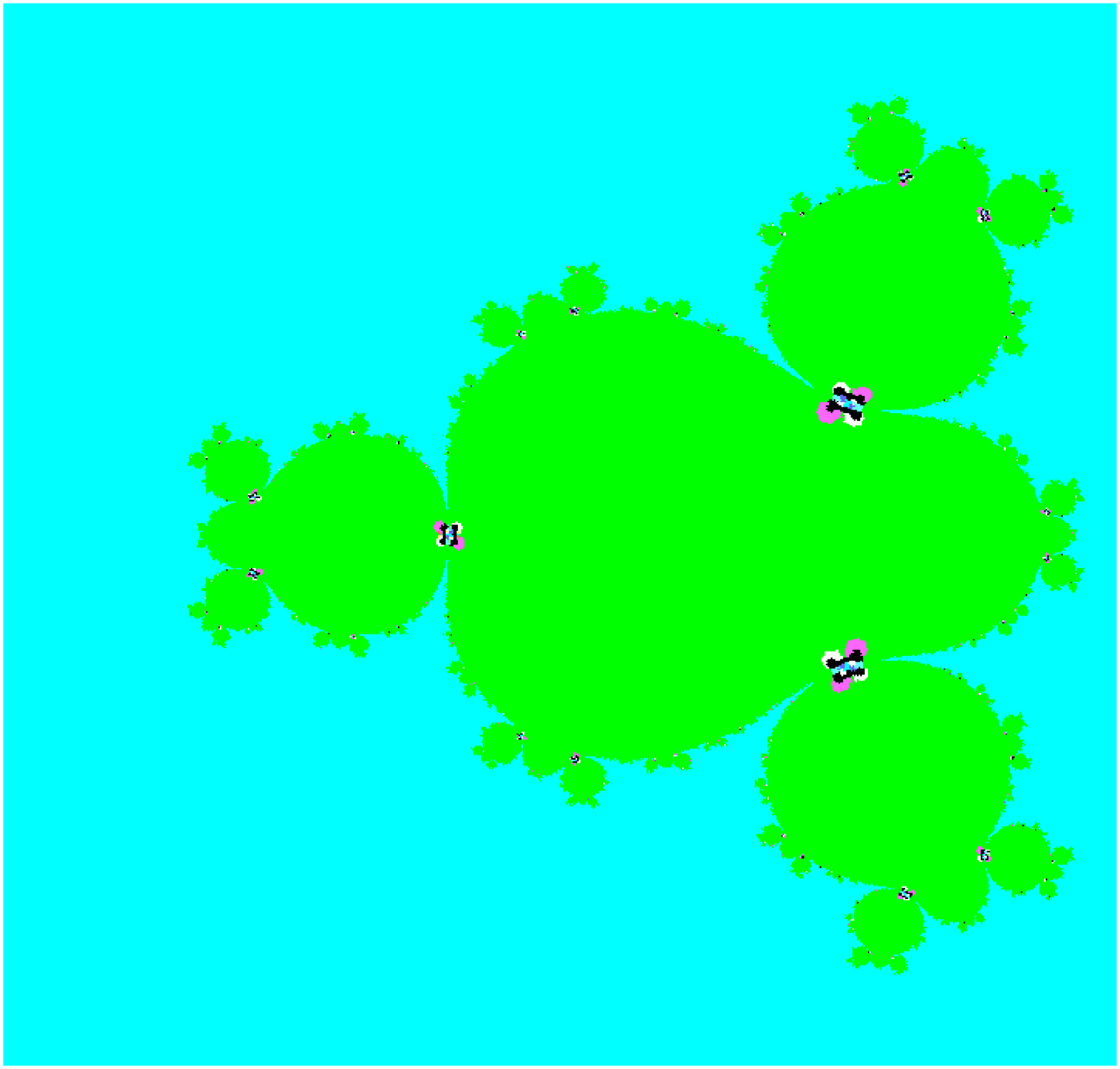}}

\caption{Dynamics of $g_4$ on \MM{1}{280}}

\label{fig:g4L1_431} 

\end{figure}
%%%%%%%%%%%%%%
\begin{figure}[ht]

\resizebox{\basinWidth}{!}{\includegraphics{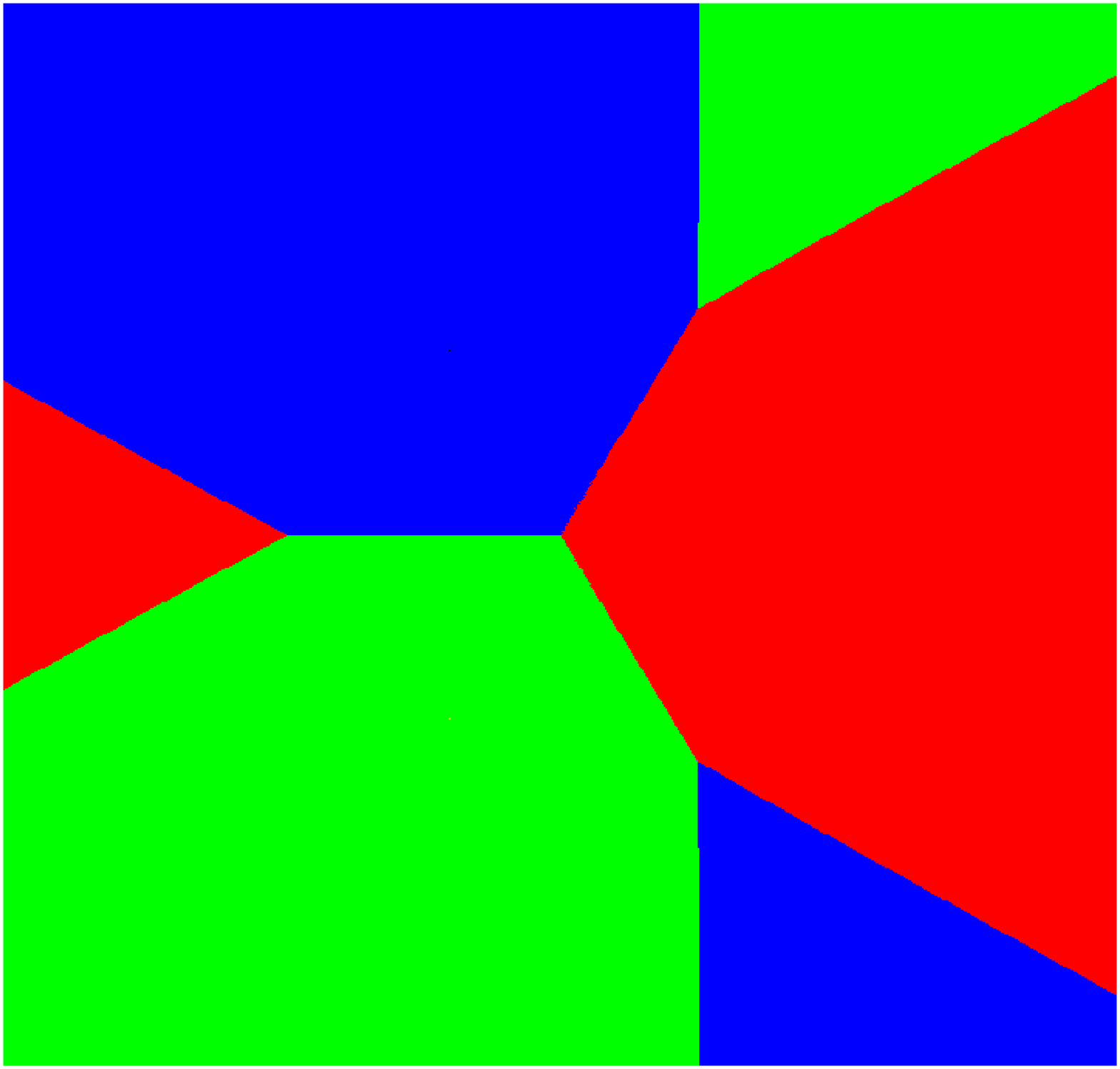}}

\caption{Dynamics of $g_4$ on $\LL{2}{56} \cap \RR$}

\label{fig:g4R2_56}

\end{figure}
%%%%%%%%%%%%%%
\begin{figure}[ht]

\resizebox{\basinWidth}{!}{\includegraphics{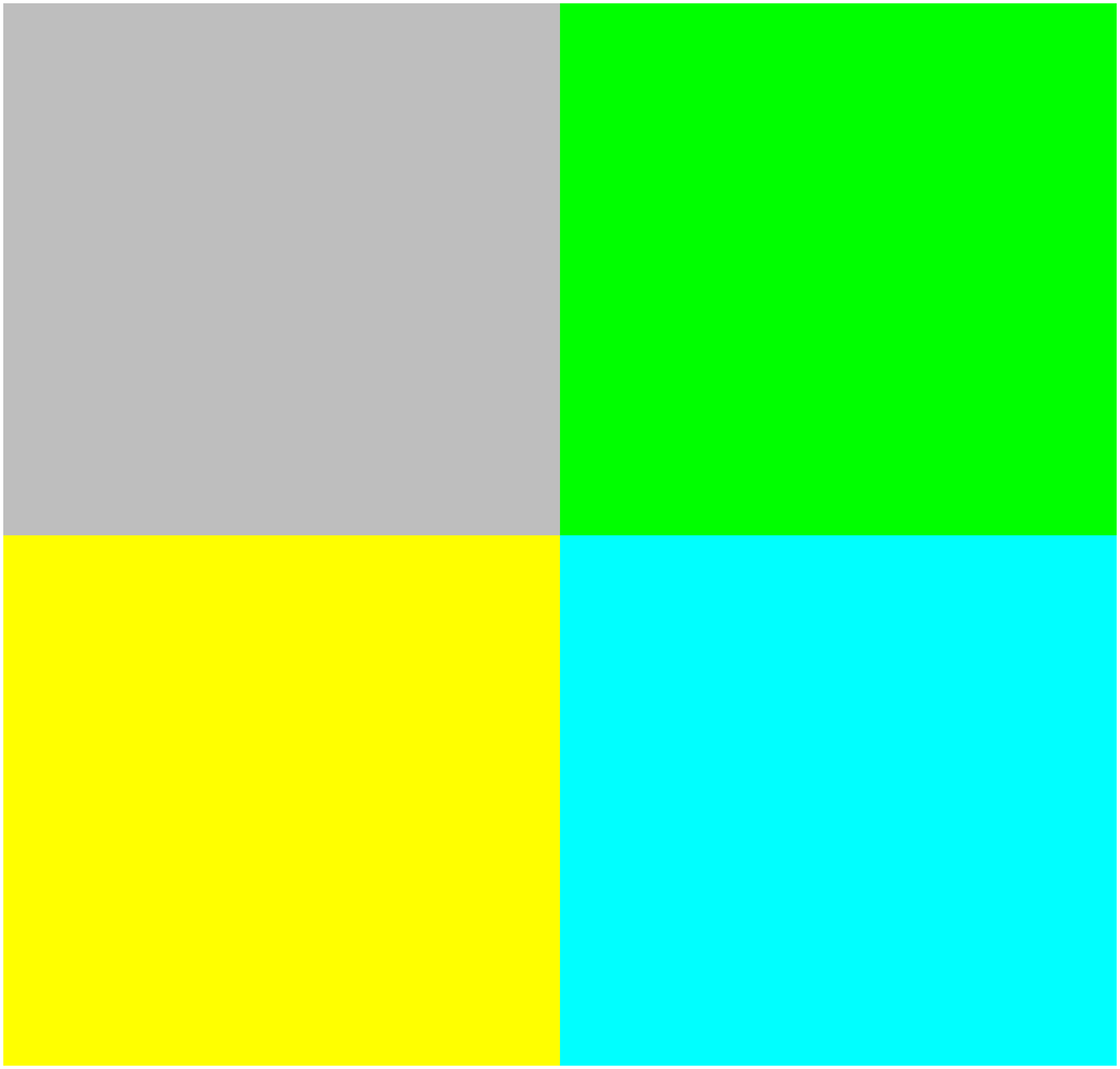}}

\caption{Dynamics of $g_4$ on $\LL{2}{105} \cap \RR{}$}

\label{fig:g4R2_105}

\end{figure}
%%%%%%%%%%%%%%
\begin{figure}[ht]

\resizebox{\basinWidth}{!}{\includegraphics{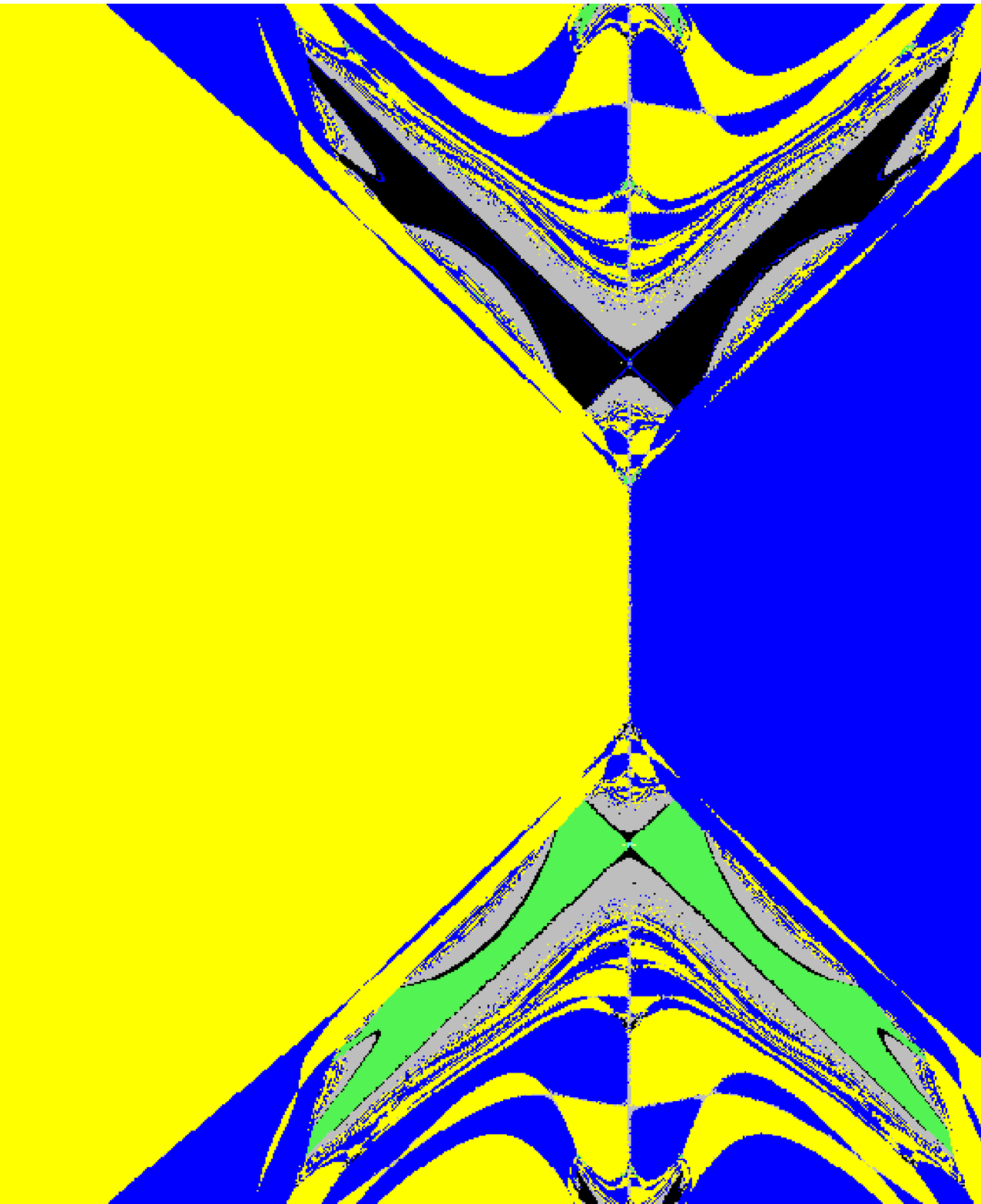}}

\caption{Dynamics of $g_4$ on $\LL{2}{280} \cap \RR$}

\label{fig:g4R2_280}

\end{figure}
%%%%%%%%%%%%%%
\begin{figure}[ht]

\resizebox{\basinWidth}{!}{\includegraphics{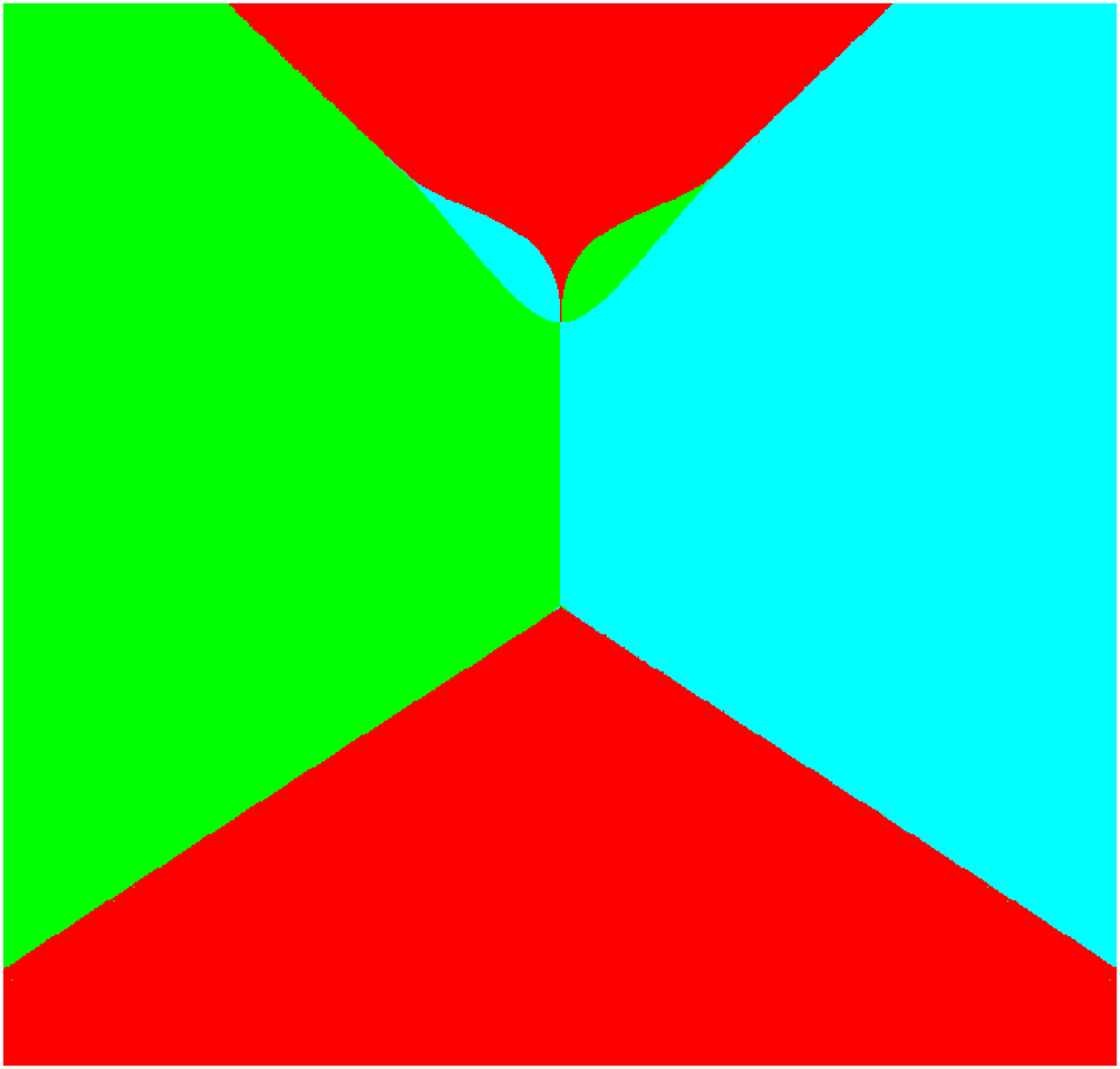}}

\caption{Dynamics of $g_4$ on $\LL{2}{420} \cap \RR$}

\label{fig:g4R2_420}

\end{figure}
%%%%%%%%%%%%%%
\begin{figure}[ht]

\resizebox{\basinWidth}{!}{\includegraphics{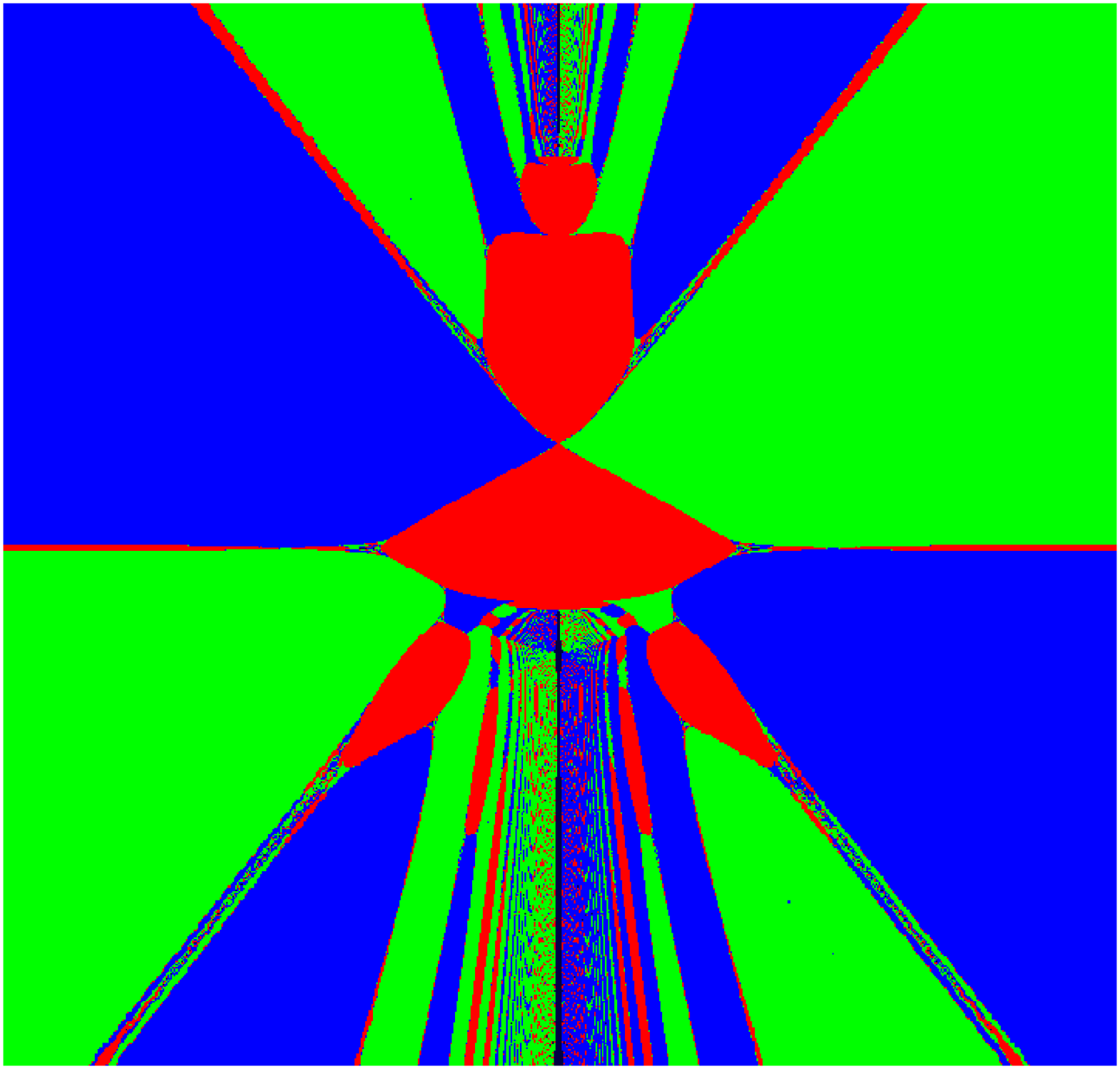}}

\caption{Dynamics of $g_4$ on $\LL{2}{840} \cap \RR$}

\label{fig:g4R2_840}

\end{figure}
%%%%%%%%%%%%%%

\clearpage

\end{document}